\tikzset{bullet/.style={
shape = circle,fill = black, inner sep = 0pt, outer sep = 0pt, minimum size = 0.35em, line width = 0pt, draw=black!100}}
\tikzset{rectangle/.style={
shape = rectangle,fill = white, inner sep = 0pt, outer sep = 0pt, minimum size = 0.35em, line width = 0pt, draw=black!100}}
\tikzset{empty/.style={
shape = circle,fill = white, inner sep = 0pt, outer sep = 0pt, minimum size = 0.35em, line width = 0pt, draw=white!100}}
\tikzset{xmark/.style={
shape = x,fill = white, inner sep = 0pt, outer sep = 0pt, minimum size = 0em, line width = 0pt, draw=white!100}}
\tikzset{longrectangle/.style={
inner sep = 1em,
rectangle,
minimum size=1em,
very thick,
draw=black!100, 
}}
\tikzset{font=\scriptsize}
\newtheorem{theorem}{Theorem}[section]
\newtheorem{proposition}[theorem]{Proposition}
\newtheorem{corollary}[theorem]{Corollary}
\newtheorem{observation}[theorem]{Observation}
\newtheorem*{maintheorem}{Main Theorem}
\theoremstyle{definition}
\newtheorem{definition}[theorem]{Definition}
\newtheorem{remark}[theorem]{Remark}
\newtheorem{example}[theorem]{Example}
\numberwithin{equation}{section}
\newcommand{\abs}[1]{\ensuremath \left\lvert #1 \right\rvert}
\begin{document}

\title[Smoothly embedded rational homology balls]{Smoothly embedded rational homology balls}

\author[H. Park]{Heesang Park}

\address{Department of Mathematics, Konkuk University, Seoul 143-701, Korea}

\email{HeesangPark@konkuk.ac.kr}

\author[J. Park]{Jongil Park}

\address{Department of Mathematical Sciences, Seoul National University, Seoul 151-747, Korea \& Korea Institute for Advanced Study, Seoul 130-722, Korea}

\email{jipark@snu.ac.kr}

\author[D. Shin]{Dongsoo Shin}

\address{Department of Mathematics, Chungnam National University, Daejeon 305-764, Korea}

\email{dsshin@cnu.ac.kr}

\subjclass{57R40, 57R55, 14B07}

\keywords{flip, rational blow-down/blow-up, rational homology ball}


\begin{abstract}
In this paper we prove the existence of rational homology balls smoothly embedded in regular neighborhoods of certain linear chains of smooth $2$-spheres by using techniques from minimal model program for 3-dimensional complex algebraic variety.
\end{abstract}

\maketitle

\section{Introduction}

A rational homology ball $B_{p,q}$ ($1 \le q < p$) is a smooth $4$-manifold with the lens space $L(p^2, pq-1)$ as its boundary such that $H_{\ast}(B_{p,q};\mathbb{Q}) \cong H_{\ast}(B^4;\mathbb{Q})$. It appears in a \emph{rational blow-down surgery} (which was developed by Fintushel--Stern~\cite{Fintushel-Stern-1997} and generalized by J. Park~\cite{JPark-1997}): If $C_{p,q}$ is a regular neighborhood of the linear chain of smooth $2$-spheres whose  dual graph is
\begin{equation}\label{equation:Cpq}
\begin{tikzpicture}
\node[bullet] (10) at (1,0) [label=above:{$-b_1$}] {};
\node[bullet] (20) at (2,0) [label=above:{$-b_2$}] {};

\node[empty] (250) at (2.5,0) [] {};
\node[empty] (30) at (3,0) [] {};

\node[bullet] (350) at (3.5,0) [label=above:{$-b_{r-1}$}] {};
\node[bullet] (450) at (4.5,0) [label=above:{$-b_r$}] {};

\draw [-] (10)--(20);
\draw [-] (20)--(250);
\draw [dotted] (20)--(350);
\draw [-] (30)--(350);
\draw [-] (350)--(450);
\end{tikzpicture}
\end{equation}
with
\begin{equation*}
\frac{p^2}{pq-1} = b_1 - \cfrac{1}{b_2 - \cfrac{1}{b_3 - \cfrac{1}{\ddots - \cfrac{1}{b_r}}}}
\end{equation*}
for $b_i \ge 2$ ($ 1 \leq i \leq r$) in a smooth $4$-manifold $X$, then one may cut $C_{p,q}$ from $X$ and paste $B_{p,q}$ along the boundary $L(p^2,pq-1)$ so that one obtains a new smooth $4$-manifold $Z=(X-C_{p,q}) \cup_{L(p^2,pq-1)} B_{p,q}$.

A rational homology ball $B_{p,q}$ itself can be also regarded as the Milnor fiber of a cyclic quotient singularity of type $\frac{1}{p^2}(1,pq-1)$ (See Section~\ref{subsection:classT} for details). So one may interpret a rational blow-down surgery as a global smoothing of a singular complex surface $X$ with a cyclic quotient singularity $o \in X$ of type $\frac{1}{p^2}(1,pq-1)$ under certain mild conditions. Explicitly, if there is no local-to-global obstruction to deform $X$, i.e., if the obstruction $H^2(X,\mathcal{T}_X)$ vanishes, then there is a $\mathbb{Q}$-Gorenstein smoothing $\pi \colon \mathcal{X} \to \Delta$ of $X$ over a small disk $\Delta = \{t \in \mathbb{C} : \abs{t} < \epsilon \}$ that is induced from a local $\mathbb{Q}$-Gorenstein smoothing of the singularity $o$.

The rational blow-down surgery was very successful for constructing small exotic $4$-manifolds (cf.~\cite{JPark-2005}, \cite{Stipsicz-Szabo-2005}, \cite{JPark-Stisicz-Szabo-2005}), and its counter part, $\mathbb{Q}$-Gorenstein smoothing, in the category of complex surface was also very useful for constructing complex surfaces of general type with small geometric genus (cf.~\cite{Lee-Park-2007}, \cite{Lee-Park-2008}, \cite{PPS-K3-2009}, \cite{PPS-K4-2009}, \cite{PPS-pg1-2013}, \cite{PPS-H1Z4-2013}, \cite{PSU-2013}).

In this way, one can easily produce a smooth $4$-manifold $Z$, where $B_{p,q}$ is smoothly embedded, by performing a rational blow-down surgery to a given $4$-manifold $X$ containing a configuration $C_{p,q}$. However, conversely, detecting $B_{p,q}$ in a given $4$-manifold $Z$ is difficult unless one knows a priori that $Z$ is obtained by performing a rational blow-down surgery.

In this paper we show that, for any relatively prime integers $p, q$ with $1 \le q < p$, there is a smoothly embedded rational homology ball $B_{p,q}$ in a regular neighborhood of a certain linear chain of smooth $2$-spheres corresponding to $p^2/(pq-1)$, which is called the \emph{$\delta$-half linear chain} (See Definition~\ref{definition:delta-half-linear-chain}). That is,

\begin{maintheorem}[Corollary~\ref{corollary:main}]
Suppose $Z$ is a smooth $4$-manifold which contains the $\delta$-half linear chain corresponding to $p^2/(pq-1)$ with $1 \le q < p$. Then there is a smoothly embedded rational homology ball $B_{p,q}$ in $Z$.
\end{maintheorem}

Hence one can detect a smoothly embedded $B_{p,q}$ in a given smooth $4$-manifold $Z$ if it contains the $\delta$-half linear chain corresponding to $p^2/(pq-1)$, which is a generalization of the recent result of Khodorovskiy~\cite{Khodorovskiy-2014}. In particular, we conclude that, in a regular neighborhood of a smooth $(-4)$-sphere, there is a rational homology ball $B_{n,1}$ for any odd $n \ge 3$; Proposition~\ref{proposition:Bn-for-V4}. The main tools of the proof are techniques from explicit semi-stable minimal model program for $3$-dimensional complex algebraic variety.

This paper is organized as follows: In Section~\ref{section:linear-chain} we briefly recall some basic notions related to Hirzebruch--Jung continued fractions and rational homology balls. And then we introduce techniques from minimal model program of 3-fold and we prove the main technical result in Section~\ref{section:flip}. The proofs of the existence of rational homology balls are given in Section~\ref{section:embedded-QHB}. Finally, we investigate the possibility of rational blow-up surgery using the embedded rational homology balls in Section~\ref{section:rational-blowup}.

\subsection*{Acknowledgements}

The authors would like to thank G. Urz\'ua for many valuable conversations. HP and DS thank Korea Institute for Advanced Study when they were associate members in KIAS.
HP was supported by Basic Science Research Program through the National Research Foundation of Korea (NRF) grant funded by the Korean Government (2011-0012111).
JP was supported by Leaders Research Grant funded by Seoul National University and by the National Research Foundation of Korea Grant (2010-0019516). He also holds a joint appointment at KIAS and in the Research Institute of Mathematics, SNU.
DS was supported by the research fund of Chungnam National University in 2014.

\section{Linear chains of projective lines and rational homology balls}
\label{section:linear-chain}

We briefly recall some relevant notions of Hirzebruch--Jung continued fractions and linear chains of $2$-spheres related to rational homology balls.

\subsection{Hirzebruch--Jung continued fractions}

The \emph{Hirzebruch--Jung continued fraction} $[b_1,\dotsc,b_r]$ for $b_i \in \mathbb{N}$ and $b_i \ge 1$ ($1 \leq i \leq r$) is defined recursively as follows: $[b_r]=b_r$ and
\[[b_i, \dotsc, b_r]=b_i-\frac{1}{[b_{i+1},\dotsc, b_r]}\]
for $i < r$. For any integers $n, a \in \mathbb{N}$ with $1 \le a < n$, it is known that $n/a$ is uniquely represented as $[b_1, \dotsc, b_r]$ for some $b_i \in \mathbb{N}$ with $b_i \ge 2$ ($i=1,\dotsc,r$). Usually, the Hirzebruch--Jung continued fraction $[b_1, \dotsc, b_r]$ represents a linear chain of $2$-spheres whose dual graph is given by
\begin{equation*}
\begin{tikzpicture}
\node[bullet] (10) at (1,0) [label=above:{$-b_1$}] {};
\node[bullet] (20) at (2,0) [label=above:{$-b_2$}] {};

\node[empty] (250) at (2.5,0) [] {};
\node[empty] (30) at (3,0) [] {};

\node[bullet] (350) at (3.5,0) [label=above:{$-b_{r-1}$}] {};
\node[bullet] (450) at (4.5,0) [label=above:{$-b_r$}] {};

\draw [-] (10)--(20);
\draw [-] (20)--(250);
\draw [dotted] (20)--(350);
\draw [-] (30)--(350);
\draw [-] (350)--(450);
\end{tikzpicture}
\end{equation*}
For $n/a=[b_1,\dotsc,b_r]$ with $b_i \ge 2$, we call the linear chain above by \emph{the linear chain corresponding to $n/a$}.

Two Hirzebruch--Jung continued fractions $n/a=[b_1,\dotsc,b_r]$ and $n/(n-a)=[a_1,\dotsc,a_e]$ are \emph{dual} to each other. That is, one can compute $a_j$'s for $n/(n-a)$ via $b_i$'s for $n/a$, and vice versa, by Riemenschneider's dot diagram \cite{Riemenschneider-1974}: Place in the $i$-th row $b_i-1$ dots, the first one under the last one of the $(i-1)$-st row; then, the column $j$ contains $a_j-1$ dots, and vice versa. Furthermore, we have
\begin{equation*}
[b_1,\dotsc,b_r,1,a_e,\dotsc,a_1]=0,
\end{equation*}
which implies that the linear chain of $2$-spheres
\begin{equation*}
\begin{tikzpicture}
\node[bullet] (10) at (1,0) [label=above:{$-b_1$}] {};
\node[bullet] (20) at (2,0) [label=above:{$-b_2$}] {};

\node[empty] (250) at (2.5,0) [] {};
\node[empty] (30) at (3,0) [] {};

\node[bullet] (350) at (3.5,0) [label=above:{$-b_{r-1}$}] {};
\node[bullet] (450) at (4.5,0) [label=above:{$-b_r$}] {};

\node[bullet] (550) at (5.5,0) [label=above:{$-1$}] {};

\node[bullet] (650) at (6.5,0) [label=above:{$-a_e$}] {};
\node[empty] (70) at (7,0) [] {};
\node[empty] (750) at (7.5,0) [] {};
\node[bullet] (80) at (8,0) [label=above:{$-a_2$}] {};
\node[bullet] (90) at (9,0) [label=above:{$-a_1$}] {};

\draw [-] (10)--(20);
\draw [-] (20)--(250);
\draw [dotted] (20)--(350);
\draw [-] (30)--(350);
\draw [-] (350)--(450);
\draw [-] (450)--(550);

\draw [-] (550)--(650);
\draw [-] (650)--(70);
\draw [dotted] (650)--(80);
\draw [-] (750)--(80);
\draw [-] (80)--(90);
\end{tikzpicture}
\end{equation*}
is blown down to \begin{tikzpicture}
\node[bullet] (00) at (0,0) [label=above:{0}] {};
\end{tikzpicture}.

\begin{example}\label{example:cyclic-49/34}
For $n=49$ and $a=34$, we have
\begin{equation*}
\text{$\dfrac{n}{a}=[2,2,5,4]$ and $\dfrac{n}{n-a}=[4,2,2,3,2,2]$}
\end{equation*}
The linear chain corresponding to $49/34$ is
\begin{equation*}
\begin{tikzpicture}
\node[bullet] (00) at (0,0) [label=above:$-2$] {};
\node[bullet] (10) at (1,0) [label=above:$-2$] {};
\node[bullet] (20) at (2,0) [label=above:$-5$] {};
\node[bullet] (30) at (3,0) [label=above:$-4$] {};

\draw [-] (00)--(10)--(20)--(30);
\end{tikzpicture}
\end{equation*}
and its Riemenschneider's dot diagram is the following:
\begin{equation*}
\begin{tikzpicture}[scale=0.5]
\node[bullet] at (0,3) [label=above:{}] {};

\node[bullet] at (0,2) [] {};

\node[bullet] at (0,1) [] {};
\node[bullet] at (1,1) [] {};
\node[bullet] at (2,1) [] {};
\node[bullet] at (3,1) [] {};

\node[bullet] at (3,0) [] {};
\node[bullet] at (4,0) [] {};
\node[bullet] at (5,0) [] {};
\end{tikzpicture}
\end{equation*}
\end{example}

\subsection{Linear chains of class $W$}

For simplicity, we define a special linear chain whose boundary bounds a rational homology ball.

\begin{definition}
A linear chain of \emph{class $W$} is a linear chain of $2$-spheres  corresponding to
\begin{equation*}
\frac{p^2}{pq-1} = [b_1,\dotsc,b_r]
\end{equation*}
with $1 \le q < p$ and $(p,q)=1$.
\end{definition}

Every linear chain of class $W$ is obtained by the following recursive algorithm:

\begin{proposition}[Wahl~\cite{Wahl-1981}]
\label{proposition:linear-chain-of-class-T}
\hfill
\begin{enumerate}
\item The linear chain \begin{tikzpicture}
\node[bullet] (00) at (0,0) [label=above:{$-4$}] {};
\end{tikzpicture} is of class $W$

\item If the linear chain
\begin{tikzpicture}
\node[bullet] (20) at (2,0) [label=above:{$-b_1$}] {};

\node[empty] (250) at (2.5,0) [] {};
\node[empty] (30) at (3,0) [] {};

\node[bullet] (350) at (3.5,0) [label=above:{$-b_r$}] {};

\draw [-] (20)--(250);
\draw [dotted] (20)--(350);
\draw [-] (30)--(350);
\end{tikzpicture}
is of class $W$, then so are
\begin{equation*}
\begin{tikzpicture}
\node[bullet] (10) at (1,0) [label=above:{$-2$}] {};
\node[bullet] (20) at (2,0) [label=above:{$-b_1$}] {};

\node[empty] (250) at (2.5,0) [] {};
\node[empty] (30) at (3,0) [] {};

\node[bullet] (350) at (3.5,0) [label=above:{$-b_{r-1}$}] {};
\node[bullet] (450) at (4.5,0) [label=above:{$-b_r-1$}] {};

\draw [-] (10)--(20);
\draw [-] (20)--(250);
\draw [dotted] (20)--(350);
\draw [-] (30)--(350);
\draw [-] (350)--(450);
\end{tikzpicture}
\text{~and~}
\begin{tikzpicture}
\node[bullet] (10) at (1,0) [label=above:{$-b_1-1$}] {};
\node[bullet] (20) at (2,0) [label=above:{$-b_2$}] {};

\node[empty] (250) at (2.5,0) [] {};
\node[empty] (30) at (3,0) [] {};

\node[bullet] (350) at (3.5,0) [label=above:{$-b_r$}] {};
\node[bullet] (450) at (4.5,0) [label=above:{$-2$}] {};

\draw [-] (10)--(20);
\draw [-] (20)--(250);
\draw [dotted] (20)--(350);
\draw [-] (30)--(350);
\draw [-] (350)--(450);
\end{tikzpicture}
\end{equation*}

\item Every linear chain of class $W$ can be obtained by starting with \begin{tikzpicture}
\node[bullet] (00) at (0,0) [label=above:{$-4$}] {};
\end{tikzpicture} and iterating the steps described in (2).
\end{enumerate}
\end{proposition}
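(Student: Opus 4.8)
The plan is to handle part (1) by a one-line check, and to reduce parts (2) and (3) to an explicit dictionary between class-$W$ chains and coprime pairs $(p,q)$. Part (1) is immediate: the single $(-4)$-curve has continued fraction $[4]=4=2^2/(2\cdot 1-1)$, so it is of class $W$ with $(p,q)=(2,1)$. For the rest, note first that $\gcd(p^2,pq-1)=1$ always (a common prime factor would divide $p$ and hence $1$), so $p^2/(pq-1)$ is in lowest terms and determines a unique Hirzebruch--Jung chain $[b_1,\dots,b_r]$ with all $b_i\ge 2$. Computing a few small cases starting from $[4]$ strongly suggests that the two moves in (2) correspond, on the level of $(p,q)$, to the arithmetic operations $o_{\mathrm{I}}\colon(p,q)\mapsto(2p-q,p)$ for the chain $[2,b_1,\dots,b_{r-1},b_r+1]$ and $o_{\mathrm{II}}\colon(p,q)\mapsto(p+q,q)$ for the chain $[b_1+1,b_2,\dots,b_r,2]$. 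Both operations send a valid pair (coprime, $1\le q<p$) to a valid pair, since $\gcd(2p-q,p)=\gcd(p+q,q)=\gcd(p,q)=1$ and the inequalities are preserved.

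To prove this correspondence I would encode the continued fraction by matrices. Writing $A(b)=\bigl(\begin{smallmatrix} b&-1\\1&0\end{smallmatrix}\bigr)\in SL_2(\mathbb{Z})$, the product $M(b_1,\dots,b_r)=A(b_1)\cdots A(b_r)$ has first column equal to the numerator and denominator of $[b_1,\dots,b_r]$ and second column $(-P_{r-1},-Q_{r-1})^{t}$ given by the previous convergent; when all $b_i\ge 2$ the convergents strictly increase, so $0\le P_{r-1}<P_r$. The crux is to guess and verify the closed form
\[
M(b_1,\dots,b_r)=\begin{pmatrix} p^2 & -(p(p-q)-1)\\ pq-1 & -(q(p-q)-1)\end{pmatrix}=:\mu(p,q)
\]
for a class-$W$ chain with parameter $(p,q)$. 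This form is forced: $\mu(p,q)$ has the correct first column and determinant $1$, and its second-column entries satisfy $0\le p(p-q)-1<p^2$ and $0\le q(p-q)-1<pq-1$, the ranges that characterize a genuine convergent matrix, so by uniqueness it equals $M(b_1,\dots,b_r)$. With $\mu$ available the two moves become pure matrix identities: using $\bigl(\begin{smallmatrix}1&1\\0&1\end{smallmatrix}\bigr)A(b_1)=A(b_1+1)$ and $A(b_r)\bigl(\begin{smallmatrix}1&0\\-1&1\end{smallmatrix}\bigr)=A(b_r+1)$, a direct $2\times 2$ multiplication gives
\[
\begin{pmatrix}1&1\\0&1\end{pmatrix}\mu(p,q)\,A(2)=\mu(p+q,q),\qquad A(2)\,\mu(p,q)\begin{pmatrix}1&0\\-1&1\end{pmatrix}=\mu(2p-q,p),
\]
whose left-hand sides are precisely the convergent matrices of $[b_1+1,\dots,b_r,2]$ and $[2,b_1,\dots,b_{r-1},b_r+1]$. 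This proves (2). I expect finding and justifying the closed form $\mu(p,q)$ to be the only genuine obstacle; once it is in place, the operation identities are mechanical.

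Finally, part (3) follows by descent on $p$. Given a valid pair $(p,q)\ne(2,1)$, coprimality excludes $p=2q$ (which would force $(p,q)=(2,1)$), so exactly one of two cases occurs. If $q<p<2q$ then $(p,q)=o_{\mathrm{I}}(q,\,2q-p)$, where $(q,2q-p)$ is again valid with strictly smaller first coordinate; if $p>2q$ then $(p,q)=o_{\mathrm{II}}(p-q,\,q)$, where $(p-q,q)$ is valid and smaller. In either case $(p,q)$ is the image under one move of a strictly smaller valid pair, so iterating terminates at $(2,1)$, that is, at $[4]$. Reading these reductions backwards realizes every class-$W$ chain by the moves of (2), which is exactly (3).
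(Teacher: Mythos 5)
Your proposal is correct, but it cannot be compared to a proof in the paper for the simple reason that the paper contains none: Proposition~\ref{proposition:linear-chain-of-class-T} is stated there as a quotation of Wahl~\cite{Wahl-1981} (see also Koll\'ar--Shepherd-Barron~\cite{Kollar-Shepherd-Barron-1988}), and the proof environment that follows it in the source belongs to the subsequent Observation about dot-diagram symmetry, not to the proposition itself. So what you have produced is a self-contained elementary substitute for the citation. The argument holds up under checking: the closed form $\mu(p,q)=\left(\begin{smallmatrix} p^2 & -(p(p-q)-1)\\ pq-1 & -(q(p-q)-1)\end{smallmatrix}\right)$ does have determinant $1$, its second column lies in the normalizing range $0\le p(p-q)-1<p^2$, $0\le q(p-q)-1<pq-1$, and since a determinant-one integer matrix with prescribed coprime first column is unique once the second column is so normalized (the convergent matrix of an HJ expansion with all $b_i\ge 2$ satisfies this because the numerators $P_i=b_iP_{i-1}-P_{i-2}$ and denominators $Q_i$ are strictly increasing), the identification $\mu(p,q)=M(b_1,\dotsc,b_r)$ is legitimate. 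The two multiplication identities $\left(\begin{smallmatrix}1&1\\0&1\end{smallmatrix}\right)\mu(p,q)A(2)=\mu(p+q,q)$ and $A(2)\mu(p,q)\left(\begin{smallmatrix}1&0\\-1&1\end{smallmatrix}\right)=\mu(2p-q,p)$ check out by direct computation, and they are indeed the convergent matrices of $[b_1+1,\dotsc,b_r,2]$ and $[2,b_1,\dotsc,b_{r-1},b_r+1]$; together with the validity of the new pairs this gives (2). Your descent for (3) is also sound: coprimality rules out $p=2q$ except at $(2,1)$, and the two cases $q<p<2q$ and $p>2q$ invert the moves $o_{\mathrm{I}}$ and $o_{\mathrm{II}}$ with strictly smaller first coordinate, so induction on $p$ terminates at $(2,1)$, i.e.\ at $[4]$, and uniqueness of the HJ expansion transports the statement from pairs back to chains. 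By way of comparison, Wahl's original route (and the viewpoint the paper gestures at through Riemenschneider dot diagrams) establishes the same recursion via the duality between $n/a$ and $n/(n-a)$ and the symmetry of the diagram, which explains geometrically \emph{why} one end of the chain must carry a $2$; your matrix formulation trades that structural insight for a mechanical verification, which is a reasonable price for a short, fully self-contained proof that also makes the $(p,q)$-bookkeeping of the two moves explicit.
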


\begin{observation}
The Riemenschneider's dot diagram for a linear chain of class $W$ is symmetric to a special dot in the diagram which will be defined in the proof.
\end{observation}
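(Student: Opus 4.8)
The plan is to prove the Observation by induction along Wahl's recursive construction of class $W$ chains (Proposition~\ref{proposition:linear-chain-of-class-T}), after first pinning down the two pieces of data the statement leaves implicit: the special dot and the precise meaning of ``symmetric''. I would begin by recording a parity fact. The number of dots in the Riemenschneider diagram of $[b_1,\dots,b_r]$ is $\sum_i (b_i-1)$. For the seed chain $[4]$ this equals $3$, and each of the two moves in Proposition~\ref{proposition:linear-chain-of-class-T}(2) — prepending a $-2$ while lowering the last vertex to $-b_r-1$, or lowering the first vertex to $-b_1-1$ while appending a $-2$ — changes the total by exactly $+2$ (a new row or column carrying a single dot, together with one extra dot on the bumped end). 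Hence every class $W$ diagram has an odd number $2s+1$ of dots, and I define the \emph{special dot} to be the central one, i.e. the $(s+1)$-st dot read along the staircase.

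Next I would make ``symmetric'' precise. Reading the diagram outward from the special dot, the $s$ dots preceding it and the $s$ dots following it each form a Riemenschneider diagram, and the content of the Observation is that these two arms are dual to one another up to reversal: one arm encodes $p/q$ while the other encodes the Hirzebruch--Jung dual $p/(p-q)$ read backwards. For instance, in the running example $p^2/(pq-1)=49/34=[2,2,5,4]$ with $(p,q)=(7,5)$, the special dot lies inside the $-5$ vertex, the left arm is the diagram of $7/5=p/q=[2,2,3]$, and the right arm is the diagram of $7/4=[2,4]$, which is the reverse of the dual $7/2=p/(p-q)=[4,2]$. This is the symmetry about the special dot. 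I would then check the base case $[4]$ directly: three collinear dots with the middle one special, both arms equal to the diagram of $2/1$.

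For the inductive step I would translate each Wahl move into its effect both on the dot diagram and on the special dot. The key structural input is that the two moves of Proposition~\ref{proposition:linear-chain-of-class-T}(2) are interchanged by Riemenschneider duality (rows $\leftrightarrow$ columns): the dual of a chain produced by the first move is produced from the dual chain by the second, and reversal swaps them as well. Using this, I would argue that the two moves act on the two arms as mirror operations that leave the central dot in its distinguished position, so that the property ``the two arms are dual-reverse of each other'' is preserved. By Proposition~\ref{proposition:linear-chain-of-class-T}(3) every class $W$ chain is reached from $[4]$ by iterating these moves, which closes the induction.

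The step I expect to be the main obstacle is the bookkeeping forced by the asymmetry of the two moves: a single move alters only one end of the chain, hence only one arm of the diagram, so the symmetry is not visibly preserved one move at a time. The crux is therefore to organize the induction around the duality that swaps the two moves, and to track precisely how the central dot migrates within the ``bumped'' vertex, so that the operation of raising an end weight and adjoining a $-2$ at the opposite end is identified — under the combined reversal-and-duality — with the same operation carried out on the other arm.
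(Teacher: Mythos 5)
Your route is the same as the paper's: induct along Wahl's recursion (Proposition~\ref{proposition:linear-chain-of-class-T}), verify the seed $[4]$, and show that each of the two moves preserves a symmetry centered at a fixed dot. What you add is worthwhile: the paper leaves ``symmetric'' informal, while your formulation --- the two arms on either side of the central dot are exchanged by Riemenschneider duality composed with reversal (for $49/34=[2,2,5,4]$, arms $7/5=[2,2,3]$ and $7/4=[2,4]$) --- is precise and correct, and your key lemma (duality, like reversal, interchanges the two Wahl moves) is true and is exactly what the inductive step needs: the first move turns the left arm $[c_1,\dotsc,c_k]$ into $[2,c_1,\dotsc,c_k]$ and bumps the last entry of the right arm, and dual-plus-reversal converts the first of these operations into the second.

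The flaw is the premise of your last paragraph, which contradicts your own first paragraph. A single Wahl move does \emph{not} alter only one end of the chain: by definition the first move prepends a $-2$ \emph{and} changes $-b_r$ to $-b_r-1$, and the second changes $-b_1$ to $-b_1-1$ \emph{and} appends a $-2$. Consequently each move adds exactly one dot to \emph{each} arm of the diagram --- one above the top-left dot paired with one to the right of the bottom-right dot, or one to the left of the top-left dot paired with one below the bottom-right dot --- which is the only reason your count of $+2$ dots per move comes out right. Hence the symmetry \emph{is} visibly preserved one move at a time; this is precisely the paper's two-line argument, and the ``main obstacle'' you brace for does not exist. In particular the central dot never migrates: it is the middle dot of the seed $[4]$, fixed once and for all, and every later dot is added at the two far ends of the staircase; only its row and column indices $(i(\delta),j(\delta))$ increase (by a row under the first move, by a column under the second), not its centrality. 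Carrying out your third paragraph as written already closes the induction; the reorganization proposed in your fourth paragraph solves a problem that is not there.
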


\begin{proof}
The Riemenschneider's diagram for \begin{tikzpicture}
\node[bullet] (00) at (0,0) [label=above:{$-4$}] {};
\end{tikzpicture} is
\begin{equation*}
\begin{tikzpicture}[scale=0.5]
\node[bullet] (00) at (0,0) [] {};
\node[bullet] (10) at (1,0) [label=above:$\delta$] {};
\node[bullet] (20) at (2,0) [] {};
\end{tikzpicture}
\end{equation*}
which is symmetric to the dot decorated by $\delta$. And the step (2) in Proposition~\ref{proposition:linear-chain-of-class-T} above can be described in the view of Riemenschneider's dot diagram as follows: The procedure
\begin{center}
from
\begin{tikzpicture}
\node[bullet] (20) at (2,0) [label=above:{$-b_1$}] {};

\node[empty] (250) at (2.5,0) [] {};
\node[empty] (30) at (3,0) [] {};

\node[bullet] (350) at (3.5,0) [label=above:{$-b_r$}] {};

\draw [-] (20)--(250);
\draw [dotted] (20)--(350);
\draw [-] (30)--(350);
\end{tikzpicture}
to
\begin{tikzpicture}
\node[bullet] (10) at (1,0) [label=above:{$-2$}] {};
\node[bullet] (20) at (2,0) [label=above:{$-b_1$}] {};

\node[empty] (250) at (2.5,0) [] {};
\node[empty] (30) at (3,0) [] {};

\node[bullet] (350) at (3.5,0) [label=above:{$-b_{r-1}$}] {};
\node[bullet] (450) at (4.5,0) [label=above:{$-b_r-1$}] {};

\draw [-] (10)--(20);
\draw [-] (20)--(250);
\draw [dotted] (20)--(350);
\draw [-] (30)--(350);
\draw [-] (350)--(450);
\end{tikzpicture}
\end{center}
can be understood as  adding a dot to the right of the last dot of the final row and adding a dot over the first dot of the first row in the given Riemenschneider's dot diagram. Also the procedure
\begin{center}
from
\begin{tikzpicture}
\node[bullet] (20) at (2,0) [label=above:{$-b_1$}] {};

\node[empty] (250) at (2.5,0) [] {};
\node[empty] (30) at (3,0) [] {};

\node[bullet] (350) at (3.5,0) [label=above:{$-b_r$}] {};

\draw [-] (20)--(250);
\draw [dotted] (20)--(350);
\draw [-] (30)--(350);
\end{tikzpicture}
to
\begin{tikzpicture}
\node[bullet] (10) at (1,0) [label=above:{$-b_1-1$}] {};
\node[bullet] (20) at (2,0) [label=above:{$-b_2$}] {};

\node[empty] (250) at (2.5,0) [] {};
\node[empty] (30) at (3,0) [] {};

\node[bullet] (350) at (3.5,0) [label=above:{$-b_r$}] {};
\node[bullet] (450) at (4.5,0) [label=above:{$-2$}] {};

\draw [-] (10)--(20);
\draw [-] (20)--(250);
\draw [dotted] (20)--(350);
\draw [-] (30)--(350);
\draw [-] (350)--(450);
\end{tikzpicture}
\end{center}
is just equivalent to adding a dot to the left of the first dot of the top row and a dot under the last dot of the final row. Therefore the Riemenschneider's diagram of a linear chain of class $W$ is still symmetric to the dot decorated by $\delta$.
\end{proof}

\begin{example}[Continued from Example~\ref{example:cyclic-49/34}]
The Riemenschneider's dot diagram for $49/34$ is symmetric to the dot decorated by $\delta$:
\begin{equation*}
\begin{tikzpicture}[scale=0.5]
\node[bullet] at (0,3) [label=above:{}] {};

\node[bullet] at (0,2) [] {};

\node[bullet] at (0,1) [] {};
\node[bullet] at (1,1) [] {};
\node[bullet] at (2,1) [label=above:{$\delta$}] {};
\node[bullet] at (3,1) [] {};

\node[bullet] at (3,0) [] {};
\node[bullet] at (4,0) [] {};
\node[bullet] at (5,0) [] {};

\draw [-] (0,3)--(0,2)--(0,1)--(1,1);
\draw [-] (3,1)--(3,0)--(4,0)--(5,0);
\end{tikzpicture}
\end{equation*}
\end{example}

\begin{definition}\label{definition:delta-half-linear-chain}
The \emph{$\delta$-position $(i(\delta),j(\delta))$} of a linear chain
\begin{tikzpicture}
\node[bullet] (20) at (2,0) [label=above:{$-b_1$}] {};

\node[empty] (250) at (2.5,0) [] {};
\node[empty] (30) at (3,0) [] {};

\node[bullet] (350) at (3.5,0) [label=above:{$-b_r$}] {};
\draw [-] (20)--(250);
\draw [dotted] (20)--(350);
\draw [-] (30)--(350);
\end{tikzpicture}
of class $W$ is the $i(\delta)$-th row and the $j(\delta)$-th column containing the center of the symmetry. The \emph{$\delta$-half linear chain} of a linear chain of class $W$ with the $\delta$-position $(i(\delta), j(\delta))$ is a linear chain
\begin{equation*}
\begin{tikzpicture}
\node[bullet] (20) at (2,0) [label=above:{$-b_1$}] {};

\node[empty] (250) at (2.5,0) [] {};
\node[empty] (30) at (3,0) [] {};

\node[bullet] (350) at (3.5,0) [label=above:{$-b_{i(\delta)-1}$}] {};
\node[bullet] (450) at (4.5,0) [label=above:{$-b_{i(\delta)}'$}] {};
\draw [-] (20)--(250);
\draw [dotted] (20)--(350)--(450);
\draw [-] (350)--(450);
\end{tikzpicture}
\end{equation*}
whose Riemenschneider's dot diagram is obtained by choosing from the first row to the $i(\delta)$-th row and from the first column to the $j(\delta)$-th column of the original Riemenschneider's dot diagram.
\end{definition}

\begin{example}[Continued from Example~\ref{example:cyclic-49/34}]
\label{example:cyclic-49/34-delta-half}
The $\delta$-position of the linear chain corresponding to $49/34$ is $(3,3)$. So the $\delta$-half linear chain corresponding to $49/34$ is
\begin{equation*}
\begin{tikzpicture}
\node[bullet] (00) at (0,0) [label=above:$-2$] {};
\node[bullet] (10) at (1,0) [label=above:$-2$] {};
\node[bullet] (20) at (2,0) [label=above:$-4$] {};

\draw [-] (00)--(10)--(20);
\end{tikzpicture}
\end{equation*}
whose Riemenschneider's dot diagram is obtained as follows:
\begin{equation*}
\begin{tikzpicture}[scale=0.5]
\node[bullet] at (0,3) [label=above:{}] {};

\node[bullet] at (0,2) [] {};

\node[bullet] at (0,1) [] {};
\node[bullet] at (1,1) [] {};
\node[bullet] at (2,1) [label=above:{$\delta$}] {};
\node[bullet] at (3,1) [] {};

\node[bullet] at (3,0) [] {};
\node[bullet] at (4,0) [] {};
\node[bullet] at (5,0) [] {};

\draw (-0.25,0.75) rectangle (2.25,3.25);
\end{tikzpicture}
\end{equation*}
\end{example}

\subsection{Rational homology balls corresponding to linear chains of class $W$}
\label{subsection:classT}

As mentioned in Introduction, the boundary of the $4$-manifold $C_{p,q}$ obtained by plumbing the linear chain of class $W$ corresponding to $p^2/(pq-1)$ with $1 \le q < p$ is the Lens space $L(p^2,pq-1)$, which also bounds a rational homology ball $B_{p,q}$ due to Casson--Harer~\cite{Casson-Harer-1981}.

On the other hand, one may interpret $B_{p,q}$ as a Milnor fiber of a cyclic quotient singularity of type $\frac{1}{p^2}(1,pq-1)$. We briefly recall some relevant notions and results. A \emph{cyclic quotient singularity of type $\frac{1}{n}(1,a)$} for $1 \le a < n$ with $(n,a)=1$ is a quotient surface singularity $\mathbb{C}^2/\mu_n$, where $\mu_n$ is a cyclic multiplicative group generated by a $n$-th root of unity $\zeta$ acting on $\mathbb{C}^2$ by $\zeta \cdot (x,y) = (\zeta x, \zeta^a y)$. Then the dual graph of the minimal resolution of a cyclic quotient singularity of type $\frac{1}{n}(1,a)$ is that of the linear chain corresponding to $n/a$. A \emph{smoothing} $\pi \colon (\mathcal{X},0) \to \Delta$ of a normal surface singularity $(X,0)$ is a proper flat map from a 3-dimensional isolated singularity $(\mathcal{X},0)$ to a small disk $\Delta = \{t \in \mathbb{C} : \abs{t} < \epsilon\}$ such that $(\pi^{-1}(0),0) \cong (X,0)$ and $\pi^{-1}(t)$ is smooth for all $t \neq 0$. The \emph{Milnor fiber} $M$ of a smoothing $\pi \colon (\mathcal{X}, 0) \to \Delta$ is roughly speaking a nearby fiber of the central fiber $\pi^{-1}(0)$. According to a general theory of Milnor fibers, $M$ is a compact $4$-manifold whose boundary is the link $L$ of the singularity $(X,0)$, where the \emph{link} $L = X \cap \partial B$ of $(X,0)$ is defined by a boundary of a small neighborhood $B$ of the singularity.

In case of a cyclic quotient singularity of type $\frac{1}{n}(1,a)$, the link $L$ is the lens space $L(n,a)$. Furthermore, for a cyclic quotient singularity of type $\frac{1}{p^2}(1,pq-1)$ with $1 \le q < p$, there is a smoothing $\pi \colon (\mathcal{X},0) \to \Delta$ (so-called \emph{$\mathbb{Q}$-Gorenstein smoothing}) whose Milnor fiber is a rational homology ball $B_{p,q}$.

\begin{remark}\label{remark:Q-bldn=smoothing}
Suppose that $\widetilde{X}$ is a smooth complex surface containing the linear chain of complex rational curves whose dual graph is given as in \eqref{equation:Cpq}. Then, since $\widetilde{X}$ contains a configuration $C_{p,q}$, one can perform a rational blow-down surgery along $C_{p,q}$ to obtain a new smooth $4$-manifold, say $Z$.
On the other hand, let $X$ be a singular complex surface with a singularity $o \in X$ obtained by contracting the linear chain \eqref{equation:Cpq} to $o$. Then one may interpret a rational blow-down surgery on $\widetilde{X}$ as a global smoothing of $X$ if there is no local-to-global obstruction to deformation. That is, if $\pi \colon \smash[b]{\mathcal{X}} \to \Delta$ is a smoothing of $X$ induced by a local smoothing of $o \in X$, then its general fiber $\pi^{-1}(t)$ ($t \neq 0$) is diffeomorphic to $Z$.
\end{remark}

\begin{definition}
A \emph{Wahl singularity} is a cyclic quotient singularity which admits a smoothing whose Milnor fiber is a rational homology ball.
\end{definition}

\begin{proposition}[cf.~Koll\'ar--Shepherd-Barron~\cite{Kollar-Shepherd-Barron-1988}]
A cyclic quotient singularity of type $\frac{1}{n}(1,a)$ is a Wahl singularity if and only if $n=p^2$ and $a=pq-1$ for some $1 \le q < p$.
\end{proposition}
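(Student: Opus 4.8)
The statement is an equivalence, so I would prove both implications. By the discussion of linear chains of class $W$ together with Proposition~\ref{proposition:linear-chain-of-class-T}, the chains of class $W$ are exactly those corresponding to $p^2/(pq-1)$ with $1\le q<p$ and $(p,q)=1$. Since the minimal resolution of $\frac{1}{n}(1,a)$ has dual graph the linear chain corresponding to $n/a$, the claim is equivalent to the statement that $\frac{1}{n}(1,a)$ is a Wahl singularity if and only if its resolution chain is of class $W$. This reframing lets me use the recursion already at hand.

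For the backward implication I would show that every class $W$ singularity admits a smoothing whose Milnor fiber is a rational homology ball, arguing by induction along Wahl's recursion in Proposition~\ref{proposition:linear-chain-of-class-T}. The base case is $\frac{1}{4}(1,1)$, i.e.\ the chain $[4]=4/1=2^2/(2\cdot 1-1)$, whose $\mathbb{Q}$-Gorenstein smoothing has Milnor fiber $B_{2,1}$, a rational homology ball by Casson--Harer~\cite{Casson-Harer-1981}. For the inductive step I would realize the two moves of step (2) as operations preserving both smoothability and the vanishing of $b_2$ of the Milnor fiber; equivalently, one checks directly that $\frac{1}{p^2}(1,pq-1)$ is a T-singularity in the sense of Koll\'ar--Shepherd-Barron~\cite{Kollar-Shepherd-Barron-1988} with $d=1$, whose distinguished smoothing has Milnor fiber $B_{p,q}$.

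The forward implication is the main obstacle. Suppose $\frac{1}{n}(1,a)$ admits a smoothing whose Milnor fiber $M$ satisfies $H_{\ast}(M;\mathbb{Q})\cong H_{\ast}(B^4;\mathbb{Q})$. First, $\partial M=L(n,a)$ bounds the rational homology ball $M$, so by the half-lives-half-dies principle the order $\abs{H_1(L(n,a);\mathbb{Z})}=n$ is a perfect square; hence $n=p^2$. Second, a cyclic quotient singularity is rational, so $b_1(M)=0$, and therefore $M$ is a rational homology ball precisely when the Milnor number $\mu=b_2(M)$ vanishes. I would then invoke the classification of the irreducible components of the versal deformation space of a cyclic quotient singularity (Koll\'ar--Shepherd-Barron~\cite{Kollar-Shepherd-Barron-1988}), on each of which $\mu$ is computable from the associated combinatorial datum (a $P$-resolution, or an admissible continued fraction expansion of $n/a$): a component carrying $\mu=0$ exists if and only if $n/a=p^2/(pq-1)$, which forces $a=pq-1$ with $(p,q)=1$ and completes the equivalence.

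The essential difficulty lies entirely in this forward direction. The perfect-square constraint is immediate but far from sufficient, since many lens spaces $L(p^2,a)$ bound rational homology balls without the corresponding singularity being smoothable to one; thus the bounding condition alone cannot detect $a=pq-1$. The real work is to exclude every smoothing component with $\mu=0$ other than the $\mathbb{Q}$-Gorenstein component, and this rests on the explicit enumeration of the deformation components together with the Milnor-number formula on each. It is precisely here that the algebraic input of Koll\'ar--Shepherd-Barron and Wahl is indispensable, rather than a purely topological argument about the boundary lens space.
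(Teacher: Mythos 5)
The paper never proves this proposition: it is stated with ``cf.~Koll\'ar--Shepherd-Barron~\cite{Kollar-Shepherd-Barron-1988}'' and used as a black box, so there is no in-paper argument for your proposal to match. What you have written is, in outline, a correct reconstruction of the cited argument, and it goes beyond the paper's treatment. Your backward direction is fine: the $\mathbb{Q}$-Gorenstein smoothing of $\frac{1}{p^2}(1,pq-1)$ (a T-singularity with $d=1$) has Milnor fiber $B_{p,q}$, whether one checks this directly via the $\mu_p$-quotient of the $A_{p-1}$-smoothing or inductively along Wahl's recursion in Proposition~\ref{proposition:linear-chain-of-class-T}. Your forward direction also isolates the right mechanism: by \cite{Kollar-Shepherd-Barron-1988} the smoothing components of a cyclic quotient singularity correspond to P-resolutions, and on the component of a P-resolution $f \colon Y \to (Q \in Z)$ the Milnor fiber is assembled, up to rational homology, from the exceptional curves of $f$ together with the Milnor fibers of the T-singularities of $Y$; hence $\mu=0$ forces $f$ to have no exceptional curves and every T-singularity of $Y$ to have $d=1$, i.e.\ $Z$ itself must be of type $\frac{1}{p^2}(1,pq-1)$. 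Two minor corrections rather than gaps: the half-lives-half-dies step giving $n=p^2$ is true but redundant, since the component analysis already yields it; and the vanishing $b_1(M)=0$ is Greuel--Steenbrink's theorem for smoothings of arbitrary normal surface singularities, so invoking rationality is a valid but inessential route. Your closing observation is also correct and worth keeping: boundary considerations alone (Lisca-type bounding results) cannot detect $a=pq-1$, which is exactly why the deformation-theoretic input of \cite{Kollar-Shepherd-Barron-1988} cannot be avoided, and why the paper cites it instead of arguing topologically.
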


\begin{corollary}
The dual graph of the minimal resolution of a Wahl singularity is a linear chain of class $W$ given in Proposition~\ref{proposition:linear-chain-of-class-T}.
\end{corollary}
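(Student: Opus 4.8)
The plan is to chain together the preceding proposition with the definition of class $W$ and the standard description of the minimal resolution of a cyclic quotient singularity. First I would invoke the Koll\'ar--Shepherd-Barron proposition: a Wahl singularity, being a cyclic quotient singularity that admits a smoothing whose Milnor fiber is a rational homology ball, must be of type $\frac{1}{p^2}(1,pq-1)$ for some integers with $1 \le q < p$. Before reading off its resolution graph I would verify that this datum genuinely defines a cyclic quotient singularity in the normalized range, i.e.\ that $1 \le pq-1 < p^2$ and $(p^2, pq-1)=1$. The inequalities are immediate ($1 \le q < p$ forces $p \ge 2$, whence $pq-1 \ge 1$, and $q < p$ gives $pq-1 < p^2$), while coprimality follows from $\gcd(p, pq-1) = \gcd(p,1) = 1$, so that $\gcd(p^2, pq-1) = 1$.

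Next I would recall the fact established earlier in this section that the dual graph of the minimal resolution of a cyclic quotient singularity of type $\frac{1}{n}(1,a)$ coincides with the dual graph of the linear chain of $2$-spheres corresponding to the Hirzebruch--Jung continued fraction of $n/a$. Applying this with $n = p^2$ and $a = pq-1$, the minimal resolution of the Wahl singularity has as its dual graph the linear chain corresponding to $\frac{p^2}{pq-1} = [b_1, \dotsc, b_r]$.

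Finally, by the very definition of a linear chain of class $W$ --- a linear chain corresponding to $\frac{p^2}{pq-1}$ with $1 \le q < p$ and $(p,q)=1$ --- this dual graph is a linear chain of class $W$, and Proposition~\ref{proposition:linear-chain-of-class-T} then identifies it with one produced by Wahl's recursive algorithm. The argument is essentially a concatenation of definitions with the two prior results, so I do not expect any genuine obstacle; the only point requiring care is confirming the coprimality and range conditions above, which ensure that the continued fraction representation of $p^2/(pq-1)$, and hence the associated linear chain, is well-defined and unique.
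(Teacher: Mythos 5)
Your proof is correct and takes essentially the same route as the paper, which states the corollary without proof precisely because it is the immediate concatenation of the Koll\'ar--Shepherd-Barron classification, the fact that the minimal resolution of a cyclic quotient singularity of type $\frac{1}{n}(1,a)$ has dual graph the linear chain corresponding to $n/a$, and the definition of class $W$ together with Proposition~\ref{proposition:linear-chain-of-class-T}. The one caveat is that the class-$W$ definition requires $(p,q)=1$, a condition not written explicitly in the paper's statement of the Koll\'ar--Shepherd-Barron proposition but part of the correct (and intended) statement of that result; your reading, including the routine verification that $\gcd(p^2,pq-1)=1$, is the intended one.
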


\section{Flips in minimal model program}
\label{section:flip}

We review some basics of divisorial contractions and flips in minimal model program from Koll{\'a}r--Mori~\cite{Kollar-Mori-1992} and HTU~\cite{Hacking-Tevelev-Urzua-2013}

\begin{definition}
A three dimensional \emph{extremal neighborhood} is a proper birational morphism $f \colon (C \subset \mathcal{W}) \to (Q \in \mathcal{Z})$ satisfying the following properties:
\begin{enumerate}
\item The canonical class $K_{\mathcal{W}}$ is $\mathbb{Q}$-Cartier and $\mathcal{W}$ has only terminal singularities;

\item $\mathcal{Z}$ is normal with a distinguished point $Q \in \mathcal{Z}$;

\item $C=f^{-1}(Q)$ is an irreducible curve;

\item $K_{\mathcal{W}} \cdot C < 0$.
\end{enumerate}
If the exceptional set of $f$ is $C$, then the extremal neighborhood is said to be \emph{flipping}. On the other hand, if it is not flipping, then the exceptional set of $f$ is of dimension $2$. In this case we call it by a \emph{divisorial} extremal neighborhood.
\end{definition}

An extremal neighborhood that we are concerned with is the following:

\begin{definition}
Let $f \colon W \to Z$ be a partial resolution of a two dimensional cyclic quotient singularity germ $(Q \in Z)$ such that $f^{-1}(Q)=C$ is a smooth rational curve with one Wahl singularity of $W$ on $C$. Suppose that $K_W \cdot C < 0$. Let $\mathcal{W} \to \Delta$ be a $\mathbb{Q}$-Gorenstein smoothing of $W$ and let $\mathcal{Z} \to \Delta$ be the corresponding blown-down deformation of $Z$; Wahl~\cite{Wahl-1976}. The induced birational morphism $(C \subset \mathcal{W}) \to (Q \in \mathcal{Z})$ is called an \emph{extremal neighborhood of type mk1A}.
\end{definition}

\begin{example}\label{example:flipping-extremal-nbhd}
Let $p^2/(pq-1)=[b_1,\dotsc,b_r]$ for $1 \le q < p$. Let $U$ be a regular neighborhood of a linear chain of $2$-spheres whose dual graph is given as follows:
\begin{equation*}
\begin{tikzpicture}
\node[bullet] (10) at (1,0) [label=above:{$-b_1$}] {};
\node[bullet] (20) at (2,0) [label=above:{$-b_2$}] {};

\node[empty] (250) at (2.5,0) [] {};
\node[empty] (30) at (3,0) [] {};

\node[bullet] (350) at (3.5,0) [label=above:{$-b_{r-1}$}] {};
\node[bullet] (450) at (4.5,0) [label=above:{$-b_r$}] {};
\node[bullet] (550) at (5.5,0) [label=above:{$-1$},label=below:{$C$}] {};

\draw [-] (10)--(20);
\draw [-] (20)--(250);
\draw [dotted] (20)--(350);
\draw [-] (30)--(350);
\draw [-] (350)--(450)--(550);
\end{tikzpicture}
\end{equation*}
Let $U \to W$ be the contraction of the linear chain corresponding to $p^2/(pq-1)$ in $U$ to a Wahl singularity $Q' \in W$ of type $\frac{1}{p^2}(1,pq-1)$. Denote again by $C \subset W$ the image of $C \subset U$. Let $U \to Z$ be the contraction of the whole above linear chain to a quotient singularity $Q \in Z$. Let $\mathcal{W} \to \Delta$ be a deformation of $W$ induced by the $\mathbb{Q}$-Gorenstein smoothing of the singularity $Q'$ and let $\mathcal{Z}$ be the blow-down deformation of $\mathcal{W} \to \Delta$ (cf.~Wahl~\cite{Wahl-1976}). Then it is not difficult to show that $(C \subset \mathcal{W}) \to (Q \in \mathcal{Z})$ is a flipping extremal neighborhood of type mk1A; cf.~HTU~\cite{Hacking-Tevelev-Urzua-2013}.
\end{example}

At first the divisorial contraction for an extremal neighborhood of type mk1A is just a blowing down of $(-1)$-curves:

\begin{proposition}[cf.~Urz{\'u}a~{\cite[Proposition~2.8]{Urzua-2013}}]
If $(C \subset \mathcal{W}) \to (Q \in \mathcal{Z})$ is a divisorial extremal neighborhood of type mk1A, then $(Q \in \mathcal{Z})$ is a Wahl singularity. The divisorial contraction $\mathcal{W} \to \mathcal{Z}$ induces the blowing down of $(-1)$-curves between the smooth fibers of $\mathcal{W} \to \Delta$ and $\mathcal{Z} \to \Delta$.
\end{proposition}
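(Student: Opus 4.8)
The plan is to analyze the contraction $f \colon \mathcal{W} \to \mathcal{Z}$ fiberwise over $\Delta$, exploiting that in the divisorial case the exceptional divisor is horizontal. First I would pin down the shape of the exceptional locus. Since the neighborhood is divisorial, $\mathrm{Exc}(f) = E$ is a prime divisor (the contracted extremal ray is irreducible). Because $f^{-1}(Q) = C$ is one-dimensional while $E$ is two-dimensional, $f$ cannot contract $E$ to the point $Q$; hence $f(E) = \Gamma$ is an irreducible curve through $Q$, and $f|_E \colon E \to \Gamma$ is a ruling with $f^{-1}(Q) = C$ as the fiber over $Q$. I would then check that $\Gamma$ is \emph{horizontal} over $\Delta$: were $\Gamma$ contained in the central fiber $Z = \mathcal{Z}_0$, its preimage $E = f^{-1}(\Gamma)$ would lie in $W = \mathcal{W}_0$, forcing the divisor $E$ to coincide with the central surface $W$, which is absurd. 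Thus $\Gamma \to \Delta$ is finite of degree one.

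Next, for the general fiber (the blow-down statement): for $t \neq 0$ the restriction $f_t \colon \mathcal{W}_t \to \mathcal{Z}_t$ is a birational morphism of surfaces contracting the irreducible curve $E_t := E \cap \mathcal{W}_t = f^{-1}(\Gamma_t)$ to the point $\Gamma_t$, where $\mathcal{W}_t$ is the smooth Milnor fiber. Since $C$ and $E_t$ are both fibers of the ruling $E \to \Gamma$, they are algebraically equivalent in $\mathcal{W}$, so $K_{\mathcal{W}} \cdot E_t = K_{\mathcal{W}} \cdot C < 0$; using adjunction along the fiber, $K_{\mathcal{W}_t} = (K_{\mathcal{W}} + \mathcal{W}_t)|_{\mathcal{W}_t}$ together with $\mathcal{W}_t \cdot E_t = 0$ gives $K_{\mathcal{W}_t} \cdot E_t < 0$. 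As $E_t$ is a contracted curve in a smooth surface it has $E_t^2 < 0$, and being a general ruling fiber it is a smooth rational curve; the genus formula $-2 = E_t^2 + K_{\mathcal{W}_t} \cdot E_t$ then forces $E_t^2 = K_{\mathcal{W}_t} \cdot E_t = -1$. Hence $f_t$ is the blow-down of the $(-1)$-curve $E_t$ and $\mathcal{Z}_t$ is smooth; more generally $f_t$ induces a composition of blow-downs of $(-1)$-curves between the smooth fibers.

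For the central fiber (the Wahl statement): the previous step shows the general fiber $\mathcal{Z}_t$ is smooth, so $\mathcal{Z} \to \Delta$ is a smoothing of its central fiber $Z$, whose only singularity is the cyclic quotient singularity $(Q \in Z)$. Because $\mathcal{Z}$ is obtained from the terminal, $\mathbb{Q}$-Gorenstein threefold $\mathcal{W}$ by a divisorial contraction of a $K$-negative extremal ray in the minimal model program, $\mathcal{Z}$ is again terminal with $K_{\mathcal{Z}}$ $\mathbb{Q}$-Cartier; thus $\mathcal{Z} \to \Delta$ is a $\mathbb{Q}$-Gorenstein smoothing of $(Q \in Z)$. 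Invoking Koll\'ar--Shepherd-Barron (the proposition recalled in Section~\ref{subsection:classT}), a cyclic quotient singularity that admits a $\mathbb{Q}$-Gorenstein smoothing is a Wahl singularity, so $(Q \in Z)$ is Wahl, completing the argument.

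The main obstacle I expect is the fiberwise identification in the middle step: rigorously establishing that $E$ is a single prime divisor ruled over a horizontal $\Gamma$ of degree one, and that the general ruling fiber $E_t$ is a reduced smooth $\mathbb{P}^1$ with $E_t^2 = -1$. This rests on the extremality (irreducibility of the contracted ray, hence of $E$) together with the terminality of $\mathcal{W}$ away from $C$, which guarantees that the contraction is a genuine smooth blow-down over the general point of $\Gamma$. Granting the standard preservation of the terminal and $\mathbb{Q}$-Gorenstein conditions under a threefold divisorial contraction, the passage to the conclusion via Koll\'ar--Shepherd-Barron is then formal.
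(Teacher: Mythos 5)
The paper itself offers no proof of this proposition: it is quoted from Urz\'ua \cite[Proposition~2.8]{Urzua-2013}, where it is deduced from Koll\'ar--Mori's classification of extremal neighborhoods \cite{Kollar-Mori-1992}, so your proposal has to stand on its own. Its middle part (the exceptional divisor $E$ is horizontal, ruled over a curve $\Gamma$ through $Q$, and adjunction plus the genus formula force a general fiber $E_t$ to be a $(-1)$-curve) is sound in outline, and you honestly flag the hard technical point --- primality of $E$, the ruled structure, and $\deg(\Gamma/\Delta)=1$ are not free in the analytic-germ setting, where $\mathbb{Q}$-factoriality and the global MMP toolkit are unavailable; supplying exactly this is what \cite{Kollar-Mori-1992} does. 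The genuine gap, however, is in the step you call ``formal'': it is \emph{false} that a cyclic quotient singularity admitting a $\mathbb{Q}$-Gorenstein smoothing is a Wahl singularity. By Koll\'ar--Shepherd-Barron \cite{Kollar-Shepherd-Barron-1988}, the cyclic quotient singularities with $\mathbb{Q}$-Gorenstein smoothings are those of class $T$, i.e.\ of type $\frac{1}{dn^2}(1,dna-1)$, and the Wahl singularities form the proper subclass $d=1$; for example $\frac{1}{8}(1,3)$ (take $d=2$, $n=2$, $a=1$) admits a $\mathbb{Q}$-Gorenstein smoothing whose Milnor fiber has $b_2=d-1=1$, so it is not Wahl. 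The proposition recalled in Section~\ref{subsection:classT} characterizes Wahl singularities as exactly the types $\frac{1}{p^2}(1,pq-1)$, where ``Wahl'' means the Milnor fiber of some smoothing is a rational homology ball; it does not say what you attribute to it. Since distinguishing ``Wahl'' from merely ``class $T$'' is precisely the content of the first assertion of the proposition, this step cannot be waved through.

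The repair is available from your own blow-down step, and it simultaneously closes the degree-one issue you left open. The Milnor fiber $M_{\mathcal{W}}$ of $\mathcal{W} \to \Delta$ (the general fiber of the germ around $C$) decomposes, by construction of the $\mathbb{Q}$-Gorenstein smoothing, as the rational homology ball Milnor fiber $B_{p,q}$ of the Wahl singularity glued along the lens space $L(p^2,pq-1)$ to a neighborhood of the deformed curve $C$; a Mayer--Vietoris computation then gives $b_1(M_{\mathcal{W}};\mathbb{Q})=0$ and $b_2(M_{\mathcal{W}};\mathbb{Q})=1$. The Milnor fiber of $\mathcal{Z}\to\Delta$ at $Q$ is obtained from $M_{\mathcal{W}}$ by contracting the $(-1)$-curves $E\cap\mathcal{W}_t$, each contraction dropping $b_2$ by one; since $b_2$ cannot become negative, there is exactly one such curve (so $\Gamma\to\Delta$ has degree one), and the resulting Milnor fiber has $b_1=b_2=0$, i.e.\ it is a rational homology ball. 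By the paper's definition this says precisely that $(Q \in Z)$ is a Wahl singularity, which is the conclusion your terminality argument alone could not reach.
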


In case of a flipping extremal neighborhood, $K_{\mathcal{Z}}$ is not $\mathbb{Q}$-Cartier in general. So we modify it:

\begin{definition}
The \emph{flip} of a flipping extremal neighborhood $f \colon (C \subset \mathcal{W}) \to (Q \in \mathcal{Z})$  (or, if no confusion, the flip of $\mathcal{W}$) is a proper birational morphism $f^+ \colon (C^+ \subset \mathcal{W}^+) \to (Q \in \mathcal{Z})$ where $\mathcal{W}^+$ is normal with only terminal singularities such that the exceptional set of $f^+$ is $C^+$ and $K_{\mathcal{W}^+}$ is $\mathbb{Q}$-Cartier and $f^+$-ample.
\end{definition}

The flipped surface $(C^+ \subset W^+)$ can be computed by a special partial resolution of $(Q \in Z)$.

\begin{definition}[HTU~\cite{Hacking-Tevelev-Urzua-2013}]
An \emph{extremal $P$-resolution} of a two dimensional cyclic quotient singularity germ $(Q \in Z)$ is a partial resolution $f^+ \colon W^+ \to Z$ such that $C^+=(f^+)^{-1}(Q)$ is a smooth rational curve and  $W^+$ has only at most two Wahl singularities and $K_{W^+}$ is ample relative to $f^+$.
\end{definition}

\begin{proposition}[Koll{\'a}r--Mori~\cite{Kollar-Mori-1992}]
\label{proposition:Kollar-Mori}
Suppose that $(C \subset \mathcal{W}) \to (Q \in \mathcal{Z})$ is a flipping extremal neighborhood of type mk1A. Let $(C \subset W) \to (Q \in Z)$ be the contraction of $C$ between the central fibers $W$ and $Z$. Then there exists an extremal $P$-resolution $(C^+ \subset W^+) \to (Q \in Z)$ such that the flip $(C^+ \subset \mathcal{W}^+) \to (Q \in \mathcal{Z})$ is obtained by the blown-down deformation of a $\mathbb{Q}$-Gorenstein smoothing of $W^+$. That is, we have the commutative diagram
\begin{center}
\includegraphics{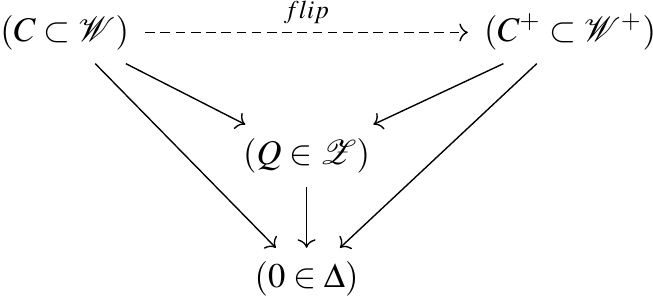}
\end{center}
%
which is restricted to the central fibers as follows:
\begin{center}
\includegraphics{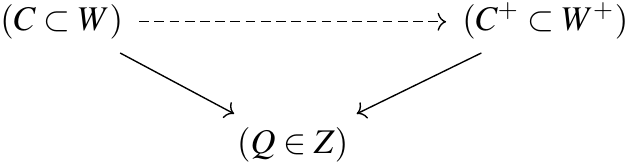}
\end{center}
\end{proposition}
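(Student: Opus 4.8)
The plan is to reduce the statement to the existence and uniqueness of three-dimensional flips together with an explicit description of the central fibre. First I would invoke the general theory of Koll\'ar--Mori: since $\mathcal{W}$ has only terminal singularities, $K_{\mathcal{W}}$ is $\mathbb{Q}$-Cartier, and $f \colon \mathcal{W} \to \mathcal{Z}$ is a flipping contraction of the irreducible curve $C$ with $K_{\mathcal{W}} \cdot C < 0$, the flip exists and is unique. Concretely one takes $\mathcal{W}^+ = \Proj_{\mathcal{Z}} \bigoplus_{m \ge 0} f_* \mathcal{O}_{\mathcal{W}}(m K_{\mathcal{W}})$, and the real content of existence is the finite generation of this relative canonical algebra over $\mathcal{O}_{\mathcal{Z}}$ for terminal threefolds. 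Because a flip is determined by its defining properties, it then suffices to exhibit \emph{any} proper birational $f^+ \colon (C^+ \subset \mathcal{W}^+) \to (Q \in \mathcal{Z})$ with $\mathcal{W}^+$ normal and terminal, exceptional locus equal to the curve $C^+$, and $K_{\mathcal{W}^+}$ both $\mathbb{Q}$-Cartier and $f^+$-ample; any such morphism is automatically the flip, so the task becomes the construction and verification of one good candidate.

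With this in mind I would build the candidate from the bottom up, at the level of surfaces. By the theory of extremal $P$-resolutions of the two-dimensional cyclic quotient germ $(Q \in Z)$, there is a partial resolution $f^+ \colon W^+ \to Z$ with $C^+ = (f^+)^{-1}(Q)$ a smooth rational curve, at most two Wahl singularities lying on $C^+$, and $K_{W^+}$ ample relative to $f^+$ (note the sign has flipped from $K_W \cdot C < 0$). Each Wahl singularity admits a local $\mathbb{Q}$-Gorenstein smoothing, and since $W^+$ is proper over the local base $Z$ with one-dimensional exceptional fibre $C^+ \cong \mathbb{P}^1$, the local-to-global obstruction vanishes; hence $W^+$ carries a $\mathbb{Q}$-Gorenstein smoothing $\mathcal{W}^+ \to \Delta$. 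Applying Wahl's blow-down deformation to the contraction $W^+ \to Z$ produces a deformation $\mathcal{Z}' \to \Delta$ of $Z$ together with a morphism $\mathcal{W}^+ \to \mathcal{Z}'$. The total space $\mathcal{W}^+$ then acquires only terminal cyclic quotient threefold singularities, namely the total spaces of the $\mathbb{Q}$-Gorenstein smoothings of the Wahl points, and $K_{\mathcal{W}^+}$ stays $f^+$-ample because ampleness is open and already holds on the special fibre where $K_{W^+}$ is $f^+$-ample.

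The remaining and most delicate point is to match the two deformations, that is, to check $\mathcal{Z}' \cong \mathcal{Z}$ so that the $\mathcal{W}^+$ just constructed flips the given $\mathcal{W}$. I would argue that $\mathcal{W} \to \mathcal{Z}$ and $\mathcal{W}^+ \to \mathcal{Z}'$ induce the same abstract deformation of the germ $(Q \in Z)$: both $W$ and $W^+$ are $\mathbb{Q}$-Gorenstein partial resolutions of the same cyclic quotient singularity, and their induced smoothings of $Z$ correspond under the comparison of $\mathbb{Q}$-Gorenstein deformation spaces. Here the hard part is the explicit verification that the central fibre of the Koll\'ar--Mori flip really is the extremal $P$-resolution with exactly this Wahl configuration, equivalently that the numerical invariants of the neighborhood (the discrepancies and the length of $C^+$) force at most two Wahl singularities on $C^+$. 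This is precisely where the detailed Koll\'ar--Mori classification of mk1A flips is needed, and I expect it to be the principal obstacle; the other steps are formal once this identification of the flipped central fibre with an extremal $P$-resolution is in place.
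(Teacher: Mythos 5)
The first thing to note is that the paper contains no proof of this proposition to compare against: it is quoted as a known result of Koll\'ar--Mori \cite{Kollar-Mori-1992} (with the surface-level formulation and diagrams following \cite{Hacking-Tevelev-Urzua-2013}) and is used downstream as a black box. Judged on its own, your attempt has the right architecture but a genuine gap. The sound part: flips of terminal threefolds are unique, being $\Proj_{\mathcal{Z}} \bigoplus_{m \ge 0} f_{*}\mathcal{O}_{\mathcal{W}}(mK_{\mathcal{W}})$, so it does suffice to exhibit one candidate $(C^{+} \subset \mathcal{W}^{+}) \to (Q \in \mathcal{Z})$ with terminal total space, exceptional set $C^{+}$, and $K_{\mathcal{W}^{+}}$ $\mathbb{Q}$-Cartier and $f^{+}$-ample; and the blow-down deformation of a $\mathbb{Q}$-Gorenstein smoothing of an extremal $P$-resolution does satisfy all of these local conditions (terminality of the total spaces of $\mathbb{Q}$-Gorenstein smoothings of Wahl singularities, openness of relative ampleness, vanishing of the local-to-global obstruction over the affine germ).

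What is missing is exactly what you flag and then wave through, and it is not a technicality but the entire content of the theorem. First, the existence of an extremal $P$-resolution of $(Q \in Z)$ is not supplied by ``the theory'': an arbitrary cyclic quotient germ need not admit one, and determining which germs do is part of what Koll\'ar--Mori and HTU prove. Second, and decisively, the matching $\mathcal{Z}' \cong \mathcal{Z}$ is asserted rather than argued. There is no a priori reason that the one-parameter deformation of $(Q \in Z)$ induced by the given mk1A neighborhood $\mathcal{W}$ lies in the image of $\DefQG(W^{+}) \to \Def(Z)$ for the $W^{+}$ you constructed; saying that both $W$ and $W^{+}$ are partial resolutions of the same germ, so their induced smoothings ``correspond,'' is circular, since different partial resolutions map their $\mathbb{Q}$-Gorenstein deformation spaces onto different subspaces of $\Def(Z)$. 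Identifying the image of $\DefQG(W)$ (the mk1A side) with the image of $\DefQG(W^{+})$ (the $P$-resolution side), together with the numerical recipe singling out which $W^{+}$ works, is precisely the classification carried out in \cite{Kollar-Mori-1992} and re-proved by different, still substantial, arguments in \cite{Hacking-Tevelev-Urzua-2013}. Since you explicitly defer this point, what you have is an outline of the known reduction with its essential step unproved.
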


In HTU~\cite{Hacking-Tevelev-Urzua-2013}, they described explicitly the numerical data of the central fibers $(C \subset W) \to (Q \in Z)$ and $(C^+ \subset W^+)$ of an extremal neighborhood of type mk1A. For details, refer HTU~\cite{Hacking-Tevelev-Urzua-2013}.

At first, the data for $(C \subset W) \to (Q \in Z)$ is given as follows: Let $(C \subset W)$ be the central fiber of an extremal neighborhood of type mk1A with a Wahl singularity of type $\frac{1}{p^2}(1, pq-1)$ lying on $C$. Let
\begin{equation*}
\frac{p^2}{pq-1} = [b_1, \dotsc, b_r]
\end{equation*}
and let $E_1, \dotsc, E_r$ be the exceptional curves of the minimal resolution $\widetilde{W}$ of $W$ with $E_j \cdot E_j=-b_j$ for all $j$. Since $K_W \cdot C < 0$ and $C \cdot C < 0$, the strict transform of $C$, denoted again by $C$, is a $(-1)$-curve intersecting only one exceptional curve, say $E_i$, at one point. We denote this data by
\begin{equation*}
(b_1, \dotsc, \underline{b}_i, \dotsc, b_r).
\end{equation*}
Then $(Q \in Z)$ is a cyclic quotient singularity of type $\frac{1}{\Delta}(1, \Omega)$ where
\begin{equation}\label{equation:flip-in-the-middle}
\frac{\Delta}{\Omega} = [b_1, \dotsc, b_i-1, \dotsc, b_r].
\end{equation}

On the other hand, the birational map $(C^+ \subset W^+) \to (Q \in Z)$ for a flipping extremal neighborhood of type mk1A which appears frequently in calculation in this paper is described as follows:

\begin{proposition}[HTU~\cite{Hacking-Tevelev-Urzua-2013}]
\label{proposition:flip}
Let $(b_1, \dotsc, b_{r-1}, \underline{b}_r)$ be a data of a flipping extremal neighborhood of type mk1A.
Suppose that $i$ is the largest index satisfying $b_i \ge 3$ and $b_j=2$ for all $i < j \le  r$. Then the image of $E_1$ in $W^+$ is the curve $C^+$ with the Wahl singularity of type $\frac{1}{m^2}(1, ma-1)$ where $\frac{m^2}{ma-1}=[b_2, \dotsc, b_i-1]$.
\end{proposition}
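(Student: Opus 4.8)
The plan is to compute the flipped central surface through its \emph{extremal $P$-resolution} and then invoke the correspondence of Proposition~\ref{proposition:Kollar-Mori}. First I would pin down the base singularity $(Q \in Z)$. Since the data is $(b_1, \dotsc, b_{r-1}, \underline{b}_r)$, formula~\eqref{equation:flip-in-the-middle} gives $\Delta/\Omega = [b_1, \dotsc, b_{r-1}, b_r - 1]$; using $b_{i+1} = \dotsb = b_r = 2$ the tail becomes a string $[\dotsc, b_i, 2, \dotsc, 2, 1]$, and repeated application of the reduction $[\dotsc, x, 1] = [\dotsc, x-1]$ collapses it to
\begin{equation*}
\frac{\Delta}{\Omega} = [b_1, \dotsc, b_{i-1}, b_i - 1].
\end{equation*}
Thus the tail of $(-2)$-curves contributes nothing beyond this collapse, and the minimal resolution of $Z$ is the chain $[b_1, \dotsc, b_{i-1}, b_i - 1]$.

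Next I would exhibit the candidate extremal $P$-resolution. Writing $E_1, \dotsc, E_{i-1}, E_i'$ for the exceptional curves of the minimal resolution of $Z$, with $(E_i')^2 = -(b_i-1)$, I contract the subchain $E_2, \dotsc, E_i'$ and keep $E_1$, calling $C^+$ the image of $E_1$. The heart of the argument is that the contracted subchain $[b_2, \dotsc, b_i-1]$ is a linear chain of class $W$, so that the contraction produces exactly one Wahl singularity, lying on $C^+$. I would prove this by induction along the Wahl recursion of Proposition~\ref{proposition:linear-chain-of-class-T}, setting $L = [b_1, \dotsc, b_r]$ and $L' = [b_2, \dotsc, b_i-1]$. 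If $L$ comes from a class $W$ chain $L_0$ by the first move (prepend a $2$, increase the last entry by one), then the last entry exceeds $2$, so $i = r$ and a direct reading of entries gives $L' = L_0$. If $L$ comes by the second move (append a $2$, increase the first entry by one), then the index $i$ is inherited from $L_0$, whence $i_0 \ge 2$ and $L' = L_0'$. In either case $L'$ is class $W$ by induction, the base cases being immediate; one may assume $i \ge 2$, the case $i = 1$ giving a smooth $C^+$.

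Finally I would verify that $W^+ \to Z$ is genuinely an extremal $P$-resolution: it carries a single Wahl singularity and $C^+ \cong \mathbb{P}^1$, so it remains only to check $K_{W^+} \cdot C^+ > 0$. Passing to the minimal resolution and writing $K = \pi^* K_{W^+} + \sum_k \alpha_k F_k$ over the Wahl chain, with $F_1$ the end curve met by $C^+$, the projection formula yields $K_{W^+} \cdot C^+ = (b_1 - 2) - \alpha_1$. Because the Wahl chain is class $W$ and hence not of type $A$ (it is not all $(-2)$-curves), a standard negative-definiteness argument forces the end discrepancy $\alpha_1 < 0$, so $K_{W^+} \cdot C^+ = b_1 - 2 + \abs{\alpha_1} > 0$. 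Hence $(C^+ \subset W^+)$ is the extremal $P$-resolution of $(Q \in Z)$; its blown-down $\mathbb{Q}$-Gorenstein smoothing is a flip of the given neighborhood, so by uniqueness of flips it \emph{is} the flip, and $C^+$, the image of $E_1$, carries the asserted Wahl singularity of type $\frac{1}{m^2}(1, ma-1)$ with $m^2/(ma-1) = [b_2, \dotsc, b_i-1]$. I expect the main obstacle to be the class $W$ induction together with matching the abstractly existing flip of Proposition~\ref{proposition:Kollar-Mori} to this explicit $W^+$; the continued-fraction collapse and the ampleness computation are then routine.
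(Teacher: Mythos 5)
The paper itself offers no proof of this proposition---it is quoted from HTU~\cite{Hacking-Tevelev-Urzua-2013}---so your proposal has to stand on its own. Most of it does: the continued-fraction collapse giving $\Delta/\Omega=[b_1,\dotsc,b_{i-1},b_i-1]$, the induction along the Wahl recursion of Proposition~\ref{proposition:linear-chain-of-class-T} showing that $[b_2,\dotsc,b_i-1]$ is of class $W$ (move 1 forces $i=r$ and returns $L_0$ itself; move 2 preserves the index $i$ when $i\ge 2$), and the discrepancy computation $K_{W^+}\cdot C^+=(b_1-2)-\alpha_1>0$ are all correct. The genuine gap is the final identification, ``its blown-down $\mathbb{Q}$-Gorenstein smoothing is a flip of the given neighborhood, so by uniqueness of flips it is the flip.'' Uniqueness of flips only applies once you have a proper birational morphism to the \emph{given} three-fold $\mathcal{Z}$, and that requires knowing that the given one-parameter deformation $\mathcal{Z}\to\Delta$ (the blow-down of $\mathcal{W}\to\Delta$) lifts to a $\mathbb{Q}$-Gorenstein deformation of \emph{your} $W^+$. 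A $\mathbb{Q}$-Gorenstein smoothing of your $W^+$ blows down to \emph{some} smoothing of $Z$, but a priori to a different arc in $\Def(Z)$; Proposition~\ref{proposition:Kollar-Mori} guarantees only that \emph{some} extremal $P$-resolution has the lifting property, and your implicit use of the definite article (``\emph{the} extremal $P$-resolution'') assumes a uniqueness that is false.

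Concretely, extremal $P$-resolutions are not unique, even in the paper's running example. For the data $[2,2,5,\underline{4}]$ one gets $(Q\in Z)$ of type $\frac{1}{36}(1,25)$ with minimal resolution $[2,2,5,3]$. Your construction keeps the $(-2)$-curve $E_1$ as $C^+$ and contracts the Wahl chain $[2,5,3]$. But blowing up the intersection point of the $(-5)$- and $(-3)$-curves gives $[2,2,6,1,4]$, and taking $C^+$ to be the $(-1)$-curve while contracting the two Wahl chains $[2,2,6]$ and $[4]$ yields a second partial resolution with $K_{W^+}\cdot C^+=-1+\tfrac34+\tfrac12=\tfrac14>0$: a different extremal $P$-resolution of the same germ, with different Wahl indices. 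By the Koll\'ar--Shepherd-Barron correspondence these two belong to different components of $\Def(Z)$, so your argument as written cannot decide which one carries the given arc $\mathcal{Z}$, i.e.\ which one is the flip. Closing this gap is exactly the nontrivial content of HTU's theorem: they match the two sides by an invariant ($\delta$) that is preserved under flips and determines the extremal $P$-resolution uniquely, or equivalently one must exhibit the lift of $\mathcal{Z}$ across $\DefQG(W^+)\to\Def(Z)$ for your specific $W^+$. Everything else in your proposal is sound, but without this step the proof does not go through.
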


\begin{example}[Continued from Example~\ref{example:cyclic-49/34}]
\label{example:cyclic-49/34-flipped}
Consider a flipping extremal neighborhood of type mk1A whose data is given by $[2,2,5,\underbar{4}]$, that is,
\begin{equation*}
\begin{tikzpicture}
\node[rectangle] (00) at (0,0) [label=above:$-2$] {};
\node[rectangle] (10) at (1,0) [label=above:$-2$] {};
\node[rectangle] (20) at (2,0) [label=above:$-5$] {};
\node[rectangle] (30) at (3,0) [label=above:$-4$] {};
\node[bullet] (40) at (4,0) [label=above:$-1$,label=below:{$C$}] {};

\draw [-] (00)--(10)--(20)--(30)--(40);
\end{tikzpicture}
\end{equation*}
where the linear chain of the vertices $\square$ is contracted to the Wahl singularity of type $\frac{1}{49}(1,34)$ on $(C \subset W)$. According to Proposition~\ref{proposition:flip} above, the Hirzebruch--Jung continued fraction for the Wahl singularity on $(C^+ \subset W^+)$ is $[2,5,3]$, which can be represented as follows:
\begin{equation*}
\begin{tikzpicture}
\node[bullet] (00) at (0,0) [label=above:$-2$,label=below:{$C^+$}] {};
\node[rectangle] (10) at (1,0) [label=above:$-2$] {};
\node[rectangle] (20) at (2,0) [label=above:$-5$] {};
\node[rectangle] (30) at (3,0) [label=above:$-3$] {};

\draw [-] (00)--(10)--(20)--(30);
\end{tikzpicture}
\end{equation*}
\end{example}

One can interpret the above flip of a flipping extremal neighborhood of type mk1A whose data is $(b_1, \dotsc, b_{r-1}, \underline{b}_r)$ via Riemenschneider's dot diagram as follows:

\begin{observation}\label{observation:flip-via-dot-diagram}
From the Riemenschneider's dot diagram associated to $[b_1,\dotsc,b_r]$, we first delete its last column so that we get a modified Riemenschneider's dot diagram and we then keep the first column of the modified Riemenschneider's dot diagram for representing $C^+$. Then the remained part is the Riemenschneider's dot diagram for $[b_2,\dotsc,b_i-1]$, which corresponds to the Wahl singularity on $(C^+ \subset W^+)$.
\end{observation}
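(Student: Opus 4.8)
The plan is to turn the statement into a purely combinatorial fact about the staircase shape of Riemenschneider's dot diagram and then to read off the result and compare it with the continued fraction $[b_2,\dots,b_i-1]$ already furnished by Proposition~\ref{proposition:flip}. Since that proposition identifies the Wahl singularity on $(C^+\subset W^+)$ with $[b_2,\dots,b_i-1]$ and identifies $C^+$ with the image of $E_1$, it suffices to verify that the prescribed operations on the dot diagram reproduce exactly this data. In other words, everything reduces to determining the rows of the diagram that remains after the deletion.

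The heart of the matter is to pin down which dots disappear when the last column is deleted. In the dot diagram of $[b_1,\dots,b_r]$ the $j$-th row carries $b_j-1$ dots and, by the defining rule, its first dot lies directly below the last dot of the $(j-1)$-st row. By the hypothesis of Proposition~\ref{proposition:flip} we have $b_{i+1}=\dots=b_r=2$, so each of the rows $i+1,\dots,r$ contains a single dot; the staircase rule then forces these lone dots to stack vertically beneath the last dot of the $i$-th row, all sitting in one and the same rightmost column. Hence the last column consists precisely of the last dot of the $i$-th row together with the single dots of the rows $i+1,\dots,r$. I would prove this claim first and carefully, as it is the only place where the special shape of the tail of the chain is used.

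Granting this, deleting that column erases the rows $i+1,\dots,r$ altogether, shortens the $i$-th row by one dot, and leaves the rows $1,\dots,i-1$ untouched; reading off what remains gives the dot diagram of $[b_1,\dots,b_{i-1},b_i-1]$, which is a legitimate chain since $b_i-1\ge 2$. Its first row, the image of $E_1$, is $C^+$, and discarding it leaves the dot diagram whose rows spell out $[b_2,\dots,b_{i-1},b_i-1]=[b_2,\dots,b_i-1]$ (the degenerate case $i=1$ simply yielding a smooth $C^+$). By Proposition~\ref{proposition:flip} this is exactly the Wahl singularity on $(C^+\subset W^+)$, which is the assertion. The main obstacle is the bookkeeping in the middle step: one must check, straight from the staircase rule, that the trailing single dots really do accumulate in one column and that no dot of the rows $1,\dots,i-1$ strays into it, so that truncating the diagram matches the truncation of the continued fraction from $[b_1,\dots,b_r]$ to $[b_1,\dots,b_i-1]$. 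As an independent cross-check I would verify the dual count: reading the surviving diagram by columns drops the final entry $a_e$ of the dual fraction, giving $[a_1,\dots,a_{e-1}]$, and one confirms that this is the dual of $[b_1,\dots,b_i-1]$.
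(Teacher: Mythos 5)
Your proposal is correct and is essentially the paper's own (implicit) argument: the paper states this Observation without proof, as a direct dot-diagram transcription of Proposition~\ref{proposition:flip}, and your staircase bookkeeping---the trailing $2$'s stack their lone dots in the rightmost column beneath the last dot of row $i$, a column that no dot of rows $1,\dotsc,i-1$ can occupy since $b_i\ge 3$---is precisely the verification the paper leaves to the reader. One small point: the statement's phrase ``first column'' must be read as ``first row'' (this is what the paper's Example~\ref{example:cyclic-49/34-flipped-Riemenschneider} and Figure~\ref{figure:Example-49/34-Riemenschneider} actually mark as $C^+$), which is the reading you correctly adopted.
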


\begin{example}[Continued from Example~\ref{example:cyclic-49/34-flipped}]
\label{example:cyclic-49/34-flipped-Riemenschneider}
The Riemenschneider's dot diagram of the Wahl singularity on $(C^+ \subset W^+)$ is the following:
\begin{equation*}
\begin{tikzpicture}[scale=0.5]
\node[bullet] at (0,3) [label=above:{}] {};

\node[bullet] at (0,2) [] {};

\node[bullet] at (0,1) [] {};
\node[bullet] at (1,1) [] {};
\node[bullet] at (2,1) [label=above:{$\delta$}] {};
\node[bullet] at (3,1) [] {};

\node[bullet] at (3,0) [] {};
\node[bullet] at (4,0) [] {};
\node[bullet] at (5,0) [] {};

\draw [-] (5,-0.5)--(5,0.5);
\draw (-0.25,2.75) rectangle (0.25,3.25);
\draw (-0.25,-0.25) rectangle (4.25,2.25);

\node at (-1,3) {$C^+$};
\end{tikzpicture}
\end{equation*}

\end{example}

\subsection{Flips and $\delta$-half linear chains}
\label{subsection:flip->delta-half}

Suppose that $p^2/(pq-1)=[b_1,\dotsc,b_r]$ and its dual $p^2/(p^2-pq+1)=[a_1,\dotsc,a_e]$ for $1 \le q < p$. Let $(i(\delta), j(\delta))$ be the $\delta$-position of the linear chain corresponding to $p^2/(pq-1)$. Let $U$ be a regular neighborhood of a linear chain of $2$-spheres whose dual graph is given as follows:
\begin{equation}\label{equation:linear-chain-p^2/(pq-1)}
\begin{tikzpicture}
\node[bullet] (10) at (1,0) [label=above:{$-b_1$},label=below:{$B_1$}] {};
\node[bullet] (20) at (2,0) [label=above:{$-b_2$},label=below:{$B_2$}] {};

\node[empty] (250) at (2.5,0) [] {};
\node[empty] (30) at (3,0) [] {};

\node[bullet] (350) at (3.5,0) [label=above:{$-b_{r-1}$},label=below:{$B_{r-1}$}] {};
\node[bullet] (450) at (4.5,0) [label=above:{$-b_r$},label=below:{$B_r$}] {};

\node[bullet] (550) at (5.5,0) [label=above:{$-1$},label=below:{$C$}] {};

\node[bullet] (650) at (6.5,0) [label=above:{$-a_e$},label=below:{$A_e$}] {};
\node[empty] (70) at (7,0) [] {};
\node[empty] (750) at (7.5,0) [] {};
\node[bullet] (80) at (8,0) [label=above:{$-a_{j(\delta)+2}$},label=below:{$A_{j(\delta)+2}$}] {};

\draw [-] (10)--(20);
\draw [-] (20)--(250);
\draw [dotted] (20)--(350);
\draw [-] (30)--(350);
\draw [-] (350)--(450);
\draw [-] (450)--(550);

\draw [-] (550)--(650);
\draw [-] (650)--(70);
\draw [dotted] (650)--(80);
\draw [-] (750)--(80);
\end{tikzpicture}
\end{equation}
Let $U \to W$ be the contraction of the linear chain corresponding to $p^2/(pq-1)$ in $U$ to a Wahl singularity $Q' \in W$ of type $\frac{1}{p^2}(1,pq-1)$. Denote again by $C \subset W$ the image of $C \subset U$. Let $U \to Z$ be the contraction of the linear chain
\begin{equation*}
\begin{tikzpicture}
\node[bullet] (10) at (1,0) [label=above:{$-b_1$},label=below:{$B_1$}] {};
\node[bullet] (20) at (2,0) [label=above:{$-b_2$},label=below:{$B_2$}] {};

\node[empty] (250) at (2.5,0) [] {};
\node[empty] (30) at (3,0) [] {};

\node[bullet] (350) at (3.5,0) [label=above:{$-b_{r-1}$},label=below:{$B_{r-1}$}] {};
\node[bullet] (450) at (4.5,0) [label=above:{$-b_r$},label=below:{$B_r$}] {};
\node[bullet] (550) at (5.5,0) [label=above:{$-1$},label=below:{$C$}] {};

\draw [-] (10)--(20);
\draw [-] (20)--(250);
\draw [dotted] (20)--(350);
\draw [-] (30)--(350);
\draw [-] (350)--(450)--(550);
\end{tikzpicture}
\end{equation*}
to a quotient singularity $Q \in Z$. Let $\mathcal{W} \to \Delta$ be a deformation of $W$ induced by the $\mathbb{Q}$-Gorenstein smoothing of the singularity $Q'$ and let $\mathcal{Z}$ be the blown-down deformation of $\mathcal{W} \to \Delta$. Then $(C \subset \mathcal{W}) \to (Q \in \mathcal{Z})$ is a flipping extremal neighborhood of type mk1A.

\begin{corollary}\label{corollary:flip->delta-half}
Applying flips described in Proposition~\ref{proposition:flip} repeatedly to $(C \subset \mathcal{W}) \to (Q \in \mathcal{Z})$, we have a deformation $\mathcal{Y} \to \Delta$ such that all of the fibers are smooth. In particular, the central fiber $Y_0$ of $\mathcal{Y} \to \Delta$ is a regular neighborhood of the $\delta$-half linear chain associated to $p^2/(pq-1)$.
\end{corollary}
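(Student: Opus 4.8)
The plan is to iterate the flip of Proposition~\ref{proposition:flip}, to read its effect off the Riemenschneider dot diagram via Observation~\ref{observation:flip-via-dot-diagram}, and to show that after exactly $i(\delta)$ steps the Wahl singularity is resolved while the curves extracted along the way assemble into the $\delta$-half chain.

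First I would analyze a single flip and set up the iteration. Applied to the Wahl singularity $[b_1,\dots,b_r]$ lying on $C$, Proposition~\ref{proposition:flip} extracts $C^+$ as the image of the first exceptional curve $E_1$ and replaces the singularity by $[b_2,\dots,b_i-1]$; on the dot diagram this deletes the last column and peels off the top row, which becomes $C^+$. To continue, one must see that the outcome is again a flipping neighborhood of type mk1A. The curve $C$ is adjacent to the spectator $A_e$, and after the flip $A_e$ becomes a $(-1)$-curve meeting the reduced Wahl singularity---just as in the successive blow-downs of the chain~\eqref{equation:linear-chain-p^2/(pq-1)}---so it is the flipping curve of the next step. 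In this way the spectators $A_e,\dots,A_{j(\delta)+2}$ provide the $i(\delta)-1$ flipping centers that follow $C$, and every flip is governed by Proposition~\ref{proposition:flip}.

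Next I would push the bookkeeping to termination. Each flip strictly drops the number of dots, so the Wahl singularity is resolved after finitely many steps and every fiber of the resulting family $\mathcal{Y}\to\Delta$ is smooth. The $k$-th flip deletes the column $e-k+1$, while the $\delta$-symmetry of the class $W$ diagram, which forces $e=i(\delta)+j(\delta)$, places every dot of the rows $i(\delta)+1,\dots,r$ in columns larger than $j(\delta)$; hence after $i(\delta)$ flips the columns $j(\delta)+1,\dots,e$ are gone, all the lower rows are emptied, and no singularity remains. The curves produced are the images of the rows $1,\dots,i(\delta)$, the last truncated at column $j(\delta)$ and the earlier ones untouched by the deletions, so their self-intersections are $-b_1,\dots,-b_{i(\delta)-1},-b'_{i(\delta)}$ and they link into the chain $[b_1,\dots,b_{i(\delta)-1},b'_{i(\delta)}]$---the $\delta$-half chain of Definition~\ref{definition:delta-half-linear-chain}. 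Therefore $Y_0$ is a regular neighborhood of it.

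The hard part is the iteration step: verifying that after each flip the next spectator curve genuinely becomes a $(-1)$-curve adjacent to the reduced Wahl singularity with $K\cdot C<0$, so that the configuration stays of type mk1A and Proposition~\ref{proposition:flip} reapplies. This requires tracking the self-intersections and incidences of the $A_j$ and the successive $C^+$ through the birational modifications, using the explicit numerics of HTU~\cite{Hacking-Tevelev-Urzua-2013}. Granting it, the termination count $i(\delta)$ and the identification of $Y_0$ with the $\delta$-half chain reduce to the dot-diagram bookkeeping described above.
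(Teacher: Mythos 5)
Your proposal is correct and follows essentially the same route as the paper's own proof: the paper likewise iterates the flip of Proposition~\ref{proposition:flip}, using the blow-down picture in the minimal resolution to show that each flip keeps the first curve as $C^+$, kills the trailing $(-2)$-curves, and turns the spectator $A_e$ into the next $(-1)$ flipping curve, with the symmetry of the Riemenschneider dot diagram forcing termination exactly at the $\delta$-half chain. The step you flag as ``the hard part'' (that each successive configuration is again a flipping mk1A neighborhood so that Proposition~\ref{proposition:flip} reapplies) is handled in the paper at the same level of detail---asserted from the resulting chain \eqref{equation:resulting-linear-chain}---and your explicit bookkeeping ($e=i(\delta)+j(\delta)$, exactly $i(\delta)$ flips deleting columns $e,\dotsc,j(\delta)+1$) agrees with the paper's argument and its Example~\ref{example:cyclic-49/34-final}.
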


\begin{proof}
Suppose that $i$ is the largest index satisfying $b_i \ge 3$ and $b_j=2$ for all $i < j \le  r$. Then, by Riemenschneider's dot diagram, we have $a_e=r-i+2$. If we apply a flip once to $(C \subset \mathcal{W})$, then we have to blow down $(r-i+1)$-times the $(-1)$-curves starting from $C \subset U$. Then all curves $B_j$ for $j > i$ are killed during the blowing-downs but only $A_e$ in the dual part is transformed into the new $(-1)$-curve $A_e'$, while we keep $B_1$ for $C^+$. Then the resulting linear chain is as follows:
\begin{equation}\label{equation:resulting-linear-chain}
\begin{tikzpicture}
\node[bullet] (10) at (1,0) [label=above:{$-b_1$},label=below:{$C^+$}] {};
\node[rectangle] (20) at (2,0) [label=above:{$-b_2$},label=below:{$B_2$}] {};

\node[empty] (250) at (2.5,0) [] {};
\node[empty] (30) at (3,0) [] {};

\node[rectangle] (350) at (3.5,0) [label=above:{$-b_{i-1}$},label=below:{$B_{i-1}$}] {};
\node[rectangle] (450) at (4.5,0) [label=above:{$-b_i+1$},label=below:{$B_i'$}] {};

\node[bullet] (550) at (5.5,0) [label=above:{$-1$},label=below:{$A_e'$}] {};

\node[bullet] (650) at (6.5,0) [label=above:{$-a_{e-1}$},label=below:{$A_{e-1}$}] {};
\node[empty] (70) at (7,0) [] {};
\node[empty] (750) at (7.5,0) [] {};
\node[bullet] (80) at (8,0) [label=above:{$-a_{j(\delta)+2}$},label=below:{$A_{j(\delta)+2}$}] {};

\draw [-] (10)--(20);
\draw [-] (20)--(250);
\draw [dotted] (20)--(350);
\draw [-] (30)--(350);
\draw [-] (350)--(450);
\draw [-] (450)--(550);

\draw [-] (550)--(650);
\draw [-] (650)--(70);
\draw [dotted] (650)--(80);
\draw [-] (750)--(80);
\end{tikzpicture}
\end{equation}
where the linear chain of the rectangles $\square$ is contracted to a new Wahl singularity on $C^+$.

The new $(-1)$-curve $A_e'$ is again the flipping curve. So we can continue to flip. But this process can be continued only until the $(j(\delta)+2)$-th column is deleted because of the symmetry of Riemenschneider's dot diagram. Then after the final flip, the remained linear chain is the following:
\begin{equation*}
\begin{tikzpicture}
\node[bullet] (10) at (1,0) [label=above:{$-b_1$},label=below:{$B_1$}] {};
\node[bullet] (20) at (2,0) [label=above:{$-b_2$},label=below:{$B_2$}] {};

\node[empty] (250) at (2.5,0) [] {};
\node[empty] (30) at (3,0) [] {};

\node[bullet] (350) at (3.5,0) [label=above:{$-b_{i(\delta)-1}$},label=below:{$B_{i(\delta)-1}$}] {};
\node[bullet] (450) at (4.5,0) [label=above:{$-b_{i(\delta)}+1$},label=below:{$B_{i(\delta)}'$}] {};

\draw [-] (10)--(20);
\draw [-] (20)--(250);
\draw [dotted] (20)--(350);
\draw [-] (30)--(350);
\draw [-] (350)--(450);
\end{tikzpicture}
\end{equation*}
Therefore we get a deformation $\mathcal{Y} \to \Delta$ such that its general fiber contains the $\delta$-half linear chain associated to $p^2/(pq-1)$. Furthermore, in the view of $U$, a flip is just blow-downs of  $(-1)$-curves. Therefore, in each step, the minimal resolution of the central fiber of the flipped deformation $(C^+ \subset \mathcal{W}^+)$ is a regular neighborhood of the resulting linear chain in \eqref{equation:resulting-linear-chain}. Hence the central fiber $Y_0$ is just a regular neighborhood of the $\delta$-half linear chain.
\end{proof}

\begin{corollary}\label{corollary:delta-half->chain}
By blowing up appropriately the $\delta$-linear chain corresponding to $p^2/(pq-1)$ with $1 \le q < p$, we obtain the linear chain in \eqref{equation:linear-chain-p^2/(pq-1)}.
\end{corollary}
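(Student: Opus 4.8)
The plan is to recognize that this corollary is simply the reverse of the construction in Corollary~\ref{corollary:flip->delta-half}. There, starting from the linear chain \eqref{equation:linear-chain-p^2/(pq-1)}, we applied a sequence of flips to the flipping extremal neighborhood $(C \subset \mathcal{W}) \to (Q \in \mathcal{Z})$, and we observed that, from the viewpoint of the regular neighborhood $U$, each flip is nothing but a sequence of blow-downs of $(-1)$-curves. The terminal linear chain of this blow-down process is precisely the $\delta$-half linear chain associated to $p^2/(pq-1)$. Since blowing up is the inverse operation of blowing down, to recover \eqref{equation:linear-chain-p^2/(pq-1)} from the $\delta$-half linear chain it suffices to reverse this entire sequence, performing the corresponding blow-ups in the opposite order.

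Concretely, I would first record the blow-down sequence explicitly. By Corollary~\ref{corollary:flip->delta-half}, the first flip blows down the $(r-i+1)$ curves $C, B_r, B_{r-1}, \dots, B_{i+1}$ in turn, where $i$ is the largest index with $b_i \ge 3$, yielding the intermediate chain \eqref{equation:resulting-linear-chain}; each subsequent flip blows down the analogous string of $(-1)$-curves, always beginning with the newly created flipping curve $A_e'$, and the process halts once the $(j(\delta)+2)$-th column of the Riemenschneider dot diagram has been deleted. Reading this list of blow-downs in reverse gives an explicit recipe: starting from the $\delta$-half linear chain $(-b_1, \dots, -b_{i(\delta)-1}, -b_{i(\delta)}')$, I would blow up at the appropriate intersection points (or at a single point of an end curve) so as to reintroduce, one at a time and in reverse order, each of the contracted $(-1)$-curves, thereby rebuilding successively each intermediate chain until \eqref{equation:linear-chain-p^2/(pq-1)} is reached.

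The main obstacle is the bookkeeping needed to verify that reinserting the $(-1)$-curves in reverse order reproduces exactly the self-intersection numbers $-b_1, \dots, -b_r$ and the dual part $-a_e, \dots, -a_{j(\delta)+2}$, together with the central $(-1)$-curve $C$. Here the symmetry of the Riemenschneider dot diagram of a linear chain of class $W$ about the $\delta$-dot is the decisive ingredient: it guarantees both that the dual entries $a_e, a_{e-1}, \dots$ reappear in the correct order as the blow-ups proceed and that the procedure terminates precisely at the $(j(\delta)+2)$-th column, matching the right-hand end of \eqref{equation:linear-chain-p^2/(pq-1)}. Once the reversal is arranged so that each blow-up is the exact inverse of the corresponding blow-down from Corollary~\ref{corollary:flip->delta-half}, the equality of the resulting chain with \eqref{equation:linear-chain-p^2/(pq-1)} is immediate.
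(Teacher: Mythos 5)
Your proposal is correct and takes essentially the same approach as the paper: the paper's proof likewise observes that, at the level of minimal resolutions, the flips of Corollary~\ref{corollary:flip->delta-half} are nothing but a sequence of blow-downs of $(-1)$-curves terminating at the $\delta$-half linear chain, and then reverses that sequence as blow-ups. Your additional bookkeeping (recording the blow-down order $C, B_r, \dots, B_{i+1}$ and invoking the symmetry of the Riemenschneider dot diagram) only makes explicit what the paper leaves implicit.
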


\begin{proof}
In the level of minimal resolutions, the flips in the proof of Corollary~\ref{corollary:flip->delta-half} above is just a sequence of blowing-downs. So the proof above says that one can obtain the $\delta$-half linear chain from the linear chain in \eqref{equation:linear-chain-p^2/(pq-1)} by blowing down appropriately; hence, the assertion follows.
\end{proof}

\begin{example}[Continued from Examples~\ref{example:cyclic-49/34-flipped} and \ref{example:cyclic-49/34-flipped-Riemenschneider}]
\label{example:cyclic-49/34-final}
We apply Corollary~\ref{corollary:flip->delta-half} above to the linear chain associated to $49/34$ as in Figure~\ref{figure:Example-49/34-flips}. Then we get a deformation $\mathcal{Y} \to \Delta$ such that all of its fibers are smooth and the central fiber is a regular neighborhood of the $\delta$-half linear chain
\begin{tikzpicture}[scale=0.5]
\node[bullet] (00) at (0,0) [label=above:$-2$] {};
\node[bullet] (10) at (1,0) [label=above:$-2$] {};
\node[bullet] (20) at (2,0) [label=above:$-4$] {};

\draw [-] (00)--(10)--(20);
\end{tikzpicture}.
In the view of Riemenschneider's dot diagram, the sequence of flips can be visualized as in Figure~\ref{figure:Example-49/34-Riemenschneider}.

\begin{figure}
\centering
\begin{tikzpicture}
\node[rectangle] (00) at (0,0) [label=above:$-2$] {};
\node[rectangle] (10) at (1,0) [label=above:$-2$] {};
\node[rectangle] (20) at (2,0) [label=above:$-5$] {};
\node[rectangle] (30) at (3,0) [label=above:$-4$] {};
\node[bullet] (40) at (4,0) [label=above:$-1$,label=below:{$C_1$}] {};
\node[bullet] (50) at (5,0) [label=above:$-2$] {};
\node[bullet] (60) at (6,0) [label=above:$-2$] {};

\draw [-] (00)--(10)--(20)--(30)--(40)--(50)--(60);
\end{tikzpicture}

$\downarrow$ \medskip

\begin{tikzpicture}
\node[bullet] (00) at (0,0) [label=above:$-2$,label=below:{$C_1^+$}] {};
\node[rectangle] (10) at (1,0) [label=above:$-2$] {};
\node[rectangle] (20) at (2,0) [label=above:$-5$] {};
\node[rectangle] (30) at (3,0) [label=above:$-3$] {};
\node[bullet] (50) at (5,0) [label=above:$-1$,label=below:{$C_2$}] {};
\node[bullet] (60) at (6,0) [label=above:$-2$] {};

\draw [-] (00)--(10)--(20)--(30)--(50)--(60);
\end{tikzpicture}

$\downarrow$ \medskip

\begin{tikzpicture}
\node[bullet] (00) at (0,0) [label=above:$-2$,label=below:{$C_1^+$}] {};
\node[bullet] (10) at (1,0) [label=above:$-2$,label=below:{$C_2^+$}] {};
\node[rectangle] (20) at (2,0) [label=above:$-5$] {};
\node[rectangle] (30) at (3,0) [label=above:$-2$] {};
\node[bullet] (60) at (6,0) [label=above:$-1$,label=below:{$C_3$}] {};

\draw [-] (00)--(10)--(20)--(30)--(60);
\end{tikzpicture}

$\downarrow$ \medskip

\begin{tikzpicture}
\node[bullet] (00) at (0,0) [label=above:$-2$,label=below:{$C_1^+$}] {};
\node[bullet] (10) at (1,0) [label=above:$-2$,label=below:{$C_2^+$}] {};
\node[bullet] (20) at (2,0) [label=above:$-4$,label=below:{$C_3^+$}] {};
\node (60) at (6,0) [label=above:{\phantom{$-1$}}] {};

\draw [-] (00)--(10)--(20);
\end{tikzpicture}

\caption{A sequence of flips for Example~\ref{example:cyclic-49/34-final}}
\label{figure:Example-49/34-flips}
\end{figure}
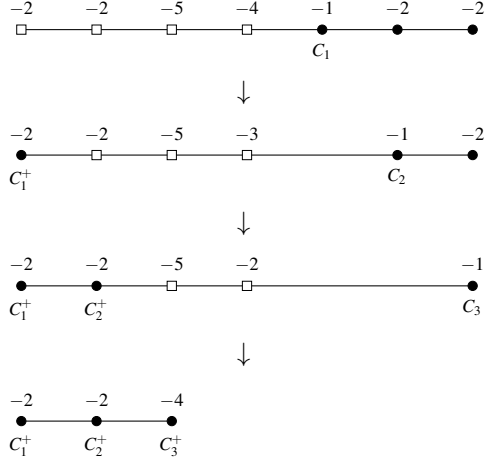

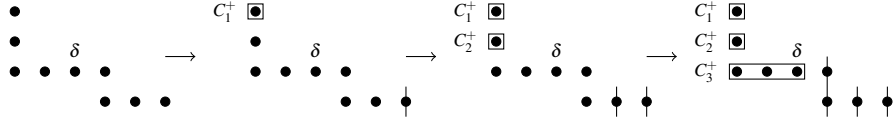
\begin{figure}
\begin{tikzpicture}[scale=0.4]

\node[bullet] at (0,3) [label=above:{}] {};

\node[bullet] at (0,2) [] {};

\node[bullet] at (0,1) [] {};
\node[bullet] at (1,1) [] {};
\node[bullet] at (2,1) [label=above:{$\delta$}] {};
\node[bullet] at (3,1) [] {};

\node[bullet] at (3,0) [] {};
\node[bullet] at (4,0) [] {};
\node[bullet] at (5,0) [] {};

\draw [->] (5,1.5)--(6,1.5);


\node[bullet] at (8,3) [label=above:{}] {};

\node[bullet] at (8,2) [] {};

\node[bullet] at (8,1) [] {};
\node[bullet] at (9,1) [] {};
\node[bullet] at (10,1) [label=above:{$\delta$}] {};
\node[bullet] at (11,1) [] {};

\node[bullet] at (11,0) [] {};
\node[bullet] at (12,0) [] {};
\node[bullet] at (13,0) [] {};

\draw [-] (13,-0.5)--(13,0.5);

\draw (7.75,2.75) rectangle (8.25,3.25);
\node at (7,3) {$C_1^+$};

\draw [->] (13,1.5)--(14,1.5);


\node[bullet] at (16,3) [label=above:{}] {};

\node[bullet] at (16,2) [] {};

\node[bullet] at (16,1) [] {};
\node[bullet] at (17,1) [] {};
\node[bullet] at (18,1) [label=above:{$\delta$}] {};
\node[bullet] at (19,1) [] {};

\node[bullet] at (19,0) [] {};
\node[bullet] at (20,0) [] {};
\node[bullet] at (21,0) [] {};

\draw [-] (21,-0.5)--(21,0.5);
\draw [-] (20,-0.5)--(20,0.5);

\draw (15.75,2.75) rectangle (16.25,3.25);
\node at (15,3) {$C_1^+$};

\draw (15.75,1.75) rectangle (16.25,2.25);
\node at (15,2) {$C_2^+$};

\draw [->] (21,1.5)--(22,1.5);


\node[bullet] at (24,3) [label=above:{}] {};

\node[bullet] at (24,2) [] {};

\node[bullet] at (24,1) [] {};
\node[bullet] at (25,1) [] {};
\node[bullet] at (26,1) [label=above:{$\delta$}] {};
\node[bullet] at (27,1) [] {};

\node[bullet] at (27,0) [] {};
\node[bullet] at (28,0) [] {};
\node[bullet] at (29,0) [] {};

\draw [-] (29,-0.5)--(29,0.5);
\draw [-] (28,-0.5)--(28,0.5);
\draw [-] (27,-0.5)--(27,1.5);

\draw (23.75,2.75) rectangle (24.25,3.25);
\node at (23,3) {$C_1^+$};

\draw (23.75,1.75) rectangle (24.25,2.25);
\node at (23,2) {$C_2^+$};

\draw (23.75,0.75) rectangle (26.25,1.25);
\node at (23,1) {$C_3^+$};
\end{tikzpicture}

\caption{Riemenschneider's dot diagrams for Example~\ref{example:cyclic-49/34-final}}

\label{figure:Example-49/34-Riemenschneider}
\end{figure}
\end{example}

\section{Embedded rational homology balls}
\label{section:embedded-QHB}

We finally prove the existence of embedded rational homology balls.

\begin{theorem}\label{theorem:Bpq}
Let $V$ be a plumbing $4$-manifold of the $\delta$-half linear chain corresponding to $p^2/(pq-1)$ with $1 \le q < p$. Then there is an embedded rational homology ball $B_{p,q}$ in $V$.
\end{theorem}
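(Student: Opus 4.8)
The plan is to reverse the flip construction from Corollary~\ref{corollary:flip->delta-half} and transport a rational homology ball through the resulting deformation. The key observation is that Theorem~\ref{theorem:Bpq} is essentially the mirror image of what has already been established: Corollary~\ref{corollary:flip->delta-half} produces a deformation $\mathcal{Y} \to \Delta$ whose general fiber is smooth and whose central fiber $Y_0$ is a regular neighborhood of the $\delta$-half linear chain, while Corollary~\ref{corollary:delta-half->chain} tells us that blowing up the $\delta$-half linear chain recovers the full linear chain \eqref{equation:linear-chain-p^2/(pq-1)}. So the first thing I would do is make precise the sense in which the embedded rational homology ball lives inside $V$, using the fact that $V$ is a neighborhood of the central fiber of a $\mathbb{Q}$-Gorenstein deformation.

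\emph{Setup via the flip deformation.} Starting from $p^2/(pq-1) = [b_1,\dotsc,b_r]$, I would build the configuration $U$ of \eqref{equation:linear-chain-p^2/(pq-1)} and run the sequence of flips of Corollary~\ref{corollary:flip->delta-half} to obtain the deformation $\mathcal{Y} \to \Delta$ whose central fiber is a regular neighborhood $Y_0$ of the $\delta$-half linear chain. Since every fiber of $\mathcal{Y} \to \Delta$ is smooth, $\mathcal{Y}$ restricted to a small disk is diffeomorphic to a product near the relevant fibers; in particular $V$ (as a plumbing neighborhood of the $\delta$-half chain, hence a neighborhood of $Y_0$) is diffeomorphic to a neighborhood of a general fiber $Y_t$, which is smooth.

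\emph{Locating $B_{p,q}$.} The rational homology ball enters through the $\mathbb{Q}$-Gorenstein smoothing of the Wahl singularity $Q' \in W$. Recall that the $\mathbb{Q}$-Gorenstein smoothing of a Wahl singularity of type $\frac{1}{p^2}(1,pq-1)$ has Milnor fiber exactly $B_{p,q}$ (Section~\ref{subsection:classT}). In the deformation $\mathcal{W} \to \Delta$ this Milnor fiber sits inside the smooth fiber $W_t$ as a neighborhood of the vanishing cycle of the singular point $Q'$. The heart of the argument is to track this embedded $B_{p,q}$ through the flips: at the level of the minimal resolution $U$, each flip is a sequence of blow-downs (as emphasized in the proof of Corollary~\ref{corollary:flip->delta-half}), and blowing down $(-1)$-curves in the smooth fibers is a diffeomorphism away from a ball. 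Thus the embedded $B_{p,q} \subset W_t$ persists, up to diffeomorphism of the ambient smooth fibers, through the entire flip sequence and lands as an embedded $B_{p,q}$ inside $Y_t$, hence inside $V$.

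\emph{The main obstacle.} The delicate point will be the bookkeeping that shows the embedded copy of $B_{p,q}$ is carried isotopically — and not destroyed — through each blow-down/flip, and that its final image genuinely lies in the neighborhood $V$ of the $\delta$-half chain rather than spilling outside it. One must verify that the rational homology ball, which is attached to the singularity $Q'$ lying on the contracted $p^2/(pq-1)$-chain, remains embedded after the curves $B_j$ ($j>i$) and the dual curves $A_\bullet$ are successively blown down, and that the boundary lens space $L(p^2,pq-1)$ is preserved throughout. Once this tracking is set up, the conclusion follows by combining the diffeomorphism $V \cong$ (neighborhood of $Y_t$) with the already-established embedding of $B_{p,q}$ in the general fiber of the $\mathbb{Q}$-Gorenstein deformation.
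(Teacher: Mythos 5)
Your overall strategy is the paper's: build the configuration $U$ of \eqref{equation:linear-chain-p^2/(pq-1)}, contract the $p^2/(pq-1)$-chain to a Wahl singularity, take the $\mathbb{Q}$-Gorenstein smoothing $\mathcal{W}\to\Delta$ whose Milnor fiber gives $B_{p,q}\subset W_t$, run the flips of Corollary~\ref{corollary:flip->delta-half} to reach a deformation $\mathcal{Y}\to\Delta$ with smooth fibers and central fiber the $\delta$-half chain neighborhood, and conclude by local triviality of the smooth family. However, the step you yourself call ``the heart of the argument'' rests on a misconception, and the ``main obstacle'' you flag does not actually exist.

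You assert that each flip is realized by ``blowing down $(-1)$-curves in the smooth fibers'' and propose to track $B_{p,q}$ isotopically through those blow-downs. For a \emph{flipping} extremal neighborhood this is false: the exceptional set of $f\colon\mathcal{W}\to\mathcal{Z}$ is the curve $C$, and the exceptional set of $f^+\colon\mathcal{W}^+\to\mathcal{Z}$ is $C^+$, both of which lie in the \emph{central} fibers. Hence the birational map $\mathcal{W}\dashrightarrow\mathcal{W}^+$ is an isomorphism off the central fibers, so $W^+_t\cong W_t$ for all $t\neq 0$; the blow-downs you have in mind occur only on the minimal resolution $U$ of the \emph{central} fiber, which is exactly how the proof of Corollary~\ref{corollary:flip->delta-half} uses them. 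This is the one-line fact the paper invokes: a flip changes only the central fiber, so $Y_t\cong W_t$ and the Milnor fiber $B_{p,q}\subset W_t$ is literally a subset of $Y_t$ --- no isotopy bookkeeping, no question of preserving the boundary $L(p^2,pq-1)$, no risk of the ball ``spilling outside.'' Blow-downs between \emph{smooth} fibers happen only for \emph{divisorial} extremal neighborhoods (the quoted proposition of Urz\'ua), which never arise in this construction since every $(-1)$-curve produced is again a flipping curve. The distinction is not cosmetic: in the genuinely divisorial case (even $n$ in Proposition~\ref{proposition:Bn-for-V4}) the general fiber really does change, and one only obtains $B_{n,1}\hookrightarrow B_{2,1}\sharp\overline{\mathbb{CP}}^2$ rather than an embedding into the original neighborhood. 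Once your tracking step is replaced by this observation, your outline becomes the paper's proof; note also the paper's final reduction replacing a chain of smooth $2$-spheres by a diffeomorphic complex model $V_{\mathbb{C}}$, which your identification of $V$ with the plumbing up to diffeomorphism handles only implicitly.
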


\begin{proof}
Assume first that the $\delta$-half linear chain consists of complex rational curves $\mathbb{CP}^1$ and $V$ is a complex surface. According to Corollary~\ref{corollary:delta-half->chain}, by blowing up appropriately in $V$, we have a regular neighborhood $U$ of the linear chain
\begin{equation*}
\begin{tikzpicture}
\node[bullet] (10) at (1,0) [label=above:{$-b_1$},label=below:{$B_1$}] {};
\node[bullet] (20) at (2,0) [label=above:{$-b_2$},label=below:{$B_2$}] {};

\node[empty] (250) at (2.5,0) [] {};
\node[empty] (30) at (3,0) [] {};

\node[bullet] (350) at (3.5,0) [label=above:{$-b_{r-1}$},label=below:{$B_{r-1}$}] {};
\node[bullet] (450) at (4.5,0) [label=above:{$-b_r$},label=below:{$B_r$}] {};

\node[bullet] (550) at (5.5,0) [label=above:{$-1$},label=below:{$C$}] {};

\node[bullet] (650) at (6.5,0) [label=above:{$-a_e$},label=below:{$A_e$}] {};
\node[empty] (70) at (7,0) [] {};
\node[empty] (750) at (7.5,0) [] {};
\node[bullet] (80) at (8,0) [label=above:{$-a_{j(\delta)+2}$},label=below:{$A_{j(\delta)+2}$}] {};

\draw [-] (10)--(20);
\draw [-] (20)--(250);
\draw [dotted] (20)--(350);
\draw [-] (30)--(350);
\draw [-] (350)--(450);
\draw [-] (450)--(550);

\draw [-] (550)--(650);
\draw [-] (650)--(70);
\draw [dotted] (650)--(80);
\draw [-] (750)--(80);
\end{tikzpicture}
\end{equation*}
corresponding to $p^2/(pq-1)$ in \eqref{equation:linear-chain-p^2/(pq-1)}. Let $U \to W$ be the contraction of the linear chain corresponding to $p^2/(pq-1)$ to the Wahl singularity of type $\frac{1}{p^2}(1,pq-1)$. We denote again by $C \subset W$ the image $C \subset U$. Let $\mathcal{W} \to \Delta$ is the deformation of $W$ which is induced from the $\mathbb{Q}$-Gorenstein smoothing of the Wahl singularity of type $\frac{1}{p^2}(1,pq-1)$. Therefore there is an embedded rational homology ball $B_{p,q}$ in every general fiber $W_t$ ($t \neq 0$).

On the other hand, by Corollary~\ref{corollary:flip->delta-half} above, if applying flips appropriately, there is a deformation $\mathcal{Y} \to \Delta$ such that all of its fibers are smooth and the central fiber $Y_0$ is just the regular neighborhood $V$ of the $\delta$-half linear chain corresponding to $p^2/(pq-1)$. Furthermore, notice that a flip changes only the central fiber of $\mathcal{W} \to \Delta$. Therefore a general fiber $Y_t$ ($t \neq 0$) is isomorphic to $W_t$; hence $Y_t$ also contains a rational homology ball $B_{p,q}$. Furthermore, since every fiber is smooth, the deformation $\mathcal{Y} \to \Delta$ is locally trivial as a fibration of smooth differentiable 4-manifolds. So the central fiber $Y_0$ is diffeomorphic to a general fiber $Y_t$ ($0 < t \ll \epsilon$). Hence there is an embedded rational homology ball $B_{p,q}$ in the central fiber $Y_0=V$.

In general, suppose that a regular neighborhood $V$ contains the $\delta$-half linear chain consisting of smooth $2$-spheres. Then one can take a complex surface model $V_{\mathbb{C}}$ containing the $\delta$-half linear chain consisting of complex rational curves such that $V_{\mathbb{C}}$ is diffeomorphic to $V$. Then we can apply the argument above to $V_{\mathbb{C}}$.
\end{proof}

\begin{corollary}\label{corollary:main}
Suppose $Z$ is a smooth $4$-manifold which contains the $\delta$-half linear chain corresponding to $p^2/(pq-1)$ with $1 \le q < p$. Then there is a smoothly embedded rational homology ball $B_{p,q}$ in $Z$.
\end{corollary}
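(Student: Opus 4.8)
The plan is to reduce this statement to Theorem~\ref{theorem:Bpq} by passing to a regular neighborhood. The essential observation is that the $\delta$-half linear chain appearing in the hypothesis is a configuration of smoothly embedded $2$-spheres intersecting according to its dual graph, and the plumbing $4$-manifold $V$ of Theorem~\ref{theorem:Bpq} is by construction nothing other than a regular neighborhood of such a configuration. So the corollary should follow from the theorem together with the standard fact that a regular neighborhood of a configuration of transversally intersecting embedded surfaces in a smooth $4$-manifold is diffeomorphic to the corresponding plumbing.

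First I would invoke the hypothesis: since $Z$ contains the $\delta$-half linear chain corresponding to $p^2/(pq-1)$, the spheres of the chain sit inside $Z$ with the prescribed self-intersections and intersection pattern. I would then choose a regular (tubular) neighborhood $V \subset Z$ of this union of spheres. By the tubular neighborhood theorem applied to a plumbing configuration, $V$ is diffeomorphic to the plumbing $4$-manifold of the $\delta$-half linear chain. Hence $V$ is precisely the $4$-manifold to which Theorem~\ref{theorem:Bpq} applies.

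Next I would apply Theorem~\ref{theorem:Bpq} to $V$, which produces a smoothly embedded rational homology ball $B_{p,q} \hookrightarrow V$. Since the embedding $V \hookrightarrow Z$ is smooth, composing the two embeddings yields a smooth embedding $B_{p,q} \hookrightarrow Z$, which is exactly the desired conclusion.

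I do not expect any genuine obstacle here: the content of the corollary is entirely carried by Theorem~\ref{theorem:Bpq}, and the only point requiring care is the identification of the regular neighborhood $V$ with the plumbing $4$-manifold. The one place worth being slightly careful about is ensuring that ``contains the $\delta$-half linear chain'' is interpreted as ``contains a configuration of embedded $2$-spheres realizing that dual graph,'' so that the neighborhood is genuinely the plumbing rather than some homotopy-equivalent but non-diffeomorphic thickening; with the standard interpretation this is automatic, and the rest is immediate.
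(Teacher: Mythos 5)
Your proposal is correct and follows exactly the route the paper intends: the paper states this corollary with no separate proof precisely because it is immediate from Theorem~\ref{theorem:Bpq} once one identifies a regular neighborhood of the $\delta$-half linear chain in $Z$ with the plumbing $4$-manifold $V$ and composes the embeddings $B_{p,q} \hookrightarrow V \hookrightarrow Z$. Your care about interpreting ``contains the $\delta$-half linear chain'' as an embedded configuration of spheres realizing the dual graph is exactly the right (and only) point of substance.
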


Notice that we can also obtain the main results in Khodorovskiy~\cite{Khodorovskiy-2014} as by-products.

\begin{corollary}[{Khodorovskiy~\cite[Theorem~1.2]{Khodorovskiy-2014}}]
\label{corollary:Bn}
Let $V_{-n-1}$ be a regular neighborhood of a smooth $2$-sphere with self-intersection number $-n-1$. Then there is an embedded rational homology ball $B_{n,1}$ in $V_{-n-1}$ for any $n \ge 2$.
\end{corollary}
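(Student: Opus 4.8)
The plan is to deduce Corollary~\ref{corollary:Bn} directly from the Main Theorem (Corollary~\ref{corollary:main}) applied to the pair $(p,q)=(n,1)$. The entire content is the purely combinatorial identification of the $\delta$-half linear chain corresponding to $n^2/(n-1)$ with a single $(-n-1)$-sphere; once this is established, $V_{-n-1}$ is exactly the regular neighborhood of that $\delta$-half chain and the embedded $B_{n,1}$ is produced by the theorem. Observe that $p^2/(pq-1)=n^2/(n-1)$ when $(p,q)=(n,1)$, and that $1\le 1<n$ together with $(n,1)=1$ makes this an admissible class $W$ datum for every $n\ge 2$.

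First I would expand $n^2/(n-1)$ as a Hirzebruch--Jung continued fraction. Using that a string of $k$ twos equals $(k+1)/k$, a one-line computation gives
\[
\frac{n^2}{n-1} = [\, n+2,\ \underbrace{2,\dotsc,2}_{n-2} \,],
\]
so the class $W$ chain has $b_1=n+2$ and $b_2=\dots=b_{n-1}=2$; equivalently, this chain is obtained from the $(-4)$-curve by iterating the second operation of Proposition~\ref{proposition:linear-chain-of-class-T} exactly $n-2$ times, which in particular reconfirms that it is of class $W$.

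Next I would write down its Riemenschneider dot diagram. By the placement rule the first row carries $b_1-1=n+1$ dots, while each of the remaining $n-2$ rows carries a single dot directly beneath the last dot of the first row, for a total of $2n-1$ dots. Since this number is odd, the symmetry observation identifies the center $\delta$ with the median dot, namely the $n$-th dot in the row-by-row reading order; because the first row already contains $n+1\ge n$ dots, this dot lies in the first row and in the $n$-th column, so $(i(\delta),j(\delta))=(1,n)$. Applying Definition~\ref{definition:delta-half-linear-chain}, the $\delta$-half linear chain is obtained by keeping rows $1$ through $1$ and columns $1$ through $n$, i.e. a single row of $n$ dots, which is the dot diagram of a single curve of self-intersection $-(n+1)$. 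Hence the $\delta$-half linear chain corresponding to $n^2/(n-1)$ is precisely a $(-n-1)$-sphere, $V_{-n-1}$ is its regular neighborhood, and Corollary~\ref{corollary:main} (or Theorem~\ref{theorem:Bpq} applied directly) yields the embedded $B_{n,1}$.

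The only step that requires genuine care is locating $\delta$: once the symmetry observation guarantees that $\delta$ is the central dot of the diagram, the remainder is an elementary dot count, so I anticipate no serious obstacle beyond this bookkeeping. As consistency checks, $n=2$ gives the $(-3)$-sphere containing $B_{2,1}$ (the base of the recursion), and $n=3$ gives the $(-4)$-sphere containing $B_{3,1}$, in agreement with the discussion in the introduction.
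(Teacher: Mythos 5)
Your proposal is correct and takes essentially the same route as the paper: expand $n^2/(n-1)=[n+2,2,\dotsc,2]$ (with $n-2$ twos), identify its $\delta$-half linear chain as the single $(-n-1)$-curve, and invoke Theorem~\ref{theorem:Bpq}. The paper asserts this identification without detail, so your median-dot location of $(i(\delta),j(\delta))=(1,n)$ merely fills in bookkeeping the paper leaves implicit.
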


\begin{proof}
The linear graph corresponding to $n^2/(n-1)$ is
\begin{equation*}
\begin{tikzpicture}
\node[bullet] (10) at (1,0) [label=above:{$-n-2$}] {};
\node[bullet] (20) at (2,0) [label=above:{$-2$}] {};

\node[empty] (250) at (2.5,0) [] {};
\node[empty] (30) at (3,0) [] {};

\node[bullet] (350) at (3.5,0) [label=above:{$-2$}] {};

\draw [-] (10)--(20);
\draw [-] (20)--(250);
\draw [dotted] (20)--(350);
\draw [-] (30)--(350);

\draw [thick, decoration={brace,mirror,raise=0.5em}, decorate] (20) -- (350)
node [pos=0.5,anchor=north,yshift=-0.75em] {$n-2$};
\end{tikzpicture}
\end{equation*}
Therefore the $\delta$-half linear chain corresponding to $n^2/(n-1)$ is
\begin{tikzpicture}
\node[bullet] (00) at (0,0) [label=above:$-n-1$] {};
\end{tikzpicture}
. Then the assertion follows from Theorem~\ref{theorem:Bpq}.
\end{proof}

\begin{proposition}[{Khodorovskiy~\cite[Theorem~1.3]{Khodorovskiy-2014}}]
\label{proposition:Bn-for-V4}
For any odd integer $n \ge 3$, there are embedded rational homology $4$-balls $B_{n,1}$ in a regular neighborhood $V_{-4}$ of a smooth $2$-sphere with self-intersection number $-4$. For any even integer $n \ge 3$, there is an embedding $B_{n,1} \hookrightarrow B_{2,1} \sharp \overline{\mathbb{CP}}^2$.
\end{proposition}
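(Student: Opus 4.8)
The plan is to deduce both statements from Corollary~\ref{corollary:Bn}, which already provides $B_{n,1}\hookrightarrow V_{-n-1}$, by relating $V_{-n-1}$ to $V_{-4}$ through a controlled sequence of blow-ups and blow-downs and then tracking the embedded rational homology ball through them. First I would record the base case: for $n=3$ the $\delta$-half linear chain corresponding to $9/2=[5,2]$ is the single $(-4)$-sphere, so Theorem~\ref{theorem:Bpq} gives $B_{3,1}\hookrightarrow V_{-4}$ on the nose. For larger $n$ one cannot argue this directly, since the $\delta$-half chain of $n^2/(n-1)$ is the $(-n-1)$-sphere and a single $(-c)$-sphere embeds in $V_{-4}$ only when $c=4k^2$; so homology alone yields the embedding only for the sparse family $n+1=4k^2$, and a genuine surgery argument is required.

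Next I would pass to a blow-up of $V_{-4}$. Since $-(n+1)=-4-(n-3)$, blowing up $V_{-4}$ at $n-3$ distinct points of the $(-4)$-sphere $S$ turns $S$ into a $(-n-1)$-sphere $\widetilde S$ and produces $n-3$ exceptional curves $E_1,\dots,E_{n-3}$, each meeting $\widetilde S$ once. Thus $V_{-4}\#(n-3)\overline{\mathbb{CP}}^2$ contains a regular neighborhood of $\widetilde S$, namely $V_{-n-1}$, and hence contains $B_{n,1}$ by Corollary~\ref{corollary:Bn}. In the language of Section~\ref{section:flip} this is exactly the statement that the flips of Corollary~\ref{corollary:flip->delta-half}, read at the level of minimal resolutions, are blow-downs: the $E_i$ are precisely the curves contracted when passing from the full chain \eqref{equation:linear-chain-p^2/(pq-1)} to the $\delta$-half.

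The substance of the argument is then to blow the $E_i$ back down while keeping $B_{n,1}$ embedded. I would arrange the $\mathbb{Q}$-Gorenstein smoothing producing $B_{n,1}$ so that, after contraction, each exceptional curve either is disjoint from $B_{n,1}$ (in which case the blow-down preserves the embedded ball) or is absorbed into a rational blow-down of the $(-4)$-configuration $C_{2,1}=V_{-4}$ to $B_{2,1}$. The parity of $n-3$ should then control the outcome: for odd $n$ the exceptional curves can be eliminated in pairs, so the contractions close up and return to $V_{-4}$, giving $B_{n,1}\hookrightarrow V_{-4}$; for even $n$ one exceptional curve is unavoidably linked to the ball, one $\overline{\mathbb{CP}}^2$ survives, and a rational blow-down of the remaining $(-4)$-sphere yields $B_{n,1}\hookrightarrow B_{2,1}\#\overline{\mathbb{CP}}^2$.

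The hard part will be exactly this last step: showing that $B_{n,1}$ can be made disjoint from the curves being contracted, and that the parity of $n$ is the \emph{only} obstruction to completing all the blow-downs inside $V_{-4}$. This is where the explicit numerical bookkeeping of the minimal model program is indispensable, since disjointness must be read off from the flip/blow-down description of Section~\ref{section:flip} rather than from homology. I expect the odd/even dichotomy to emerge from whether the chain of contractions closes up symmetrically onto the $\delta$-configuration of $C_{2,1}$ or leaves one unpaired exceptional curve, reproducing Khodorovskiy's parity phenomenon through the flip calculus.
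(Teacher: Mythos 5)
Your opening moves are correct but they are the easy part: the $n=3$ case does follow at once from Theorem~\ref{theorem:Bpq} (the $\delta$-half chain of $9/2=[5,2]$ is the single $(-4)$-curve), and blowing up $n-3$ points on the $(-4)$-sphere gives $B_{n,1}\hookrightarrow V_{-4}\sharp(n-3)\overline{\mathbb{CP}}^2$ via Corollary~\ref{corollary:Bn}. Everything after that, however, is a restatement of the problem rather than a proof. The whole content of the proposition is the claim that the exceptional curves can be removed again, and at exactly this point you offer only heuristics: ``arrange the smoothing so that each exceptional curve is disjoint from $B_{n,1}$,'' ``eliminated in pairs,'' ``I expect the dichotomy to emerge.'' No mechanism is supplied for any of these. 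Note that each $E_i$ meets $\widetilde S$ once, so $E_i\cap V_{-n-1}$ is a fiber disk of the disk bundle in which $B_{n,1}$ sits; nothing in your sketch (or in the paper) produces an isotopy of $B_{n,1}$ off these disks. Worse, an isotopy-and-blow-down argument of this kind carries no parity information whatsoever: if it worked, it would work for all $n$ and would ``prove'' $B_{n,1}\hookrightarrow V_{-4}$ for even $n$ as well, which is not what the proposition asserts. So the parity dichotomy cannot emerge from such an argument; your pairing heuristic is reverse-engineered from the statement rather than derived from anything.

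The missing ingredient, which is the actual engine of the paper's proof, is to work with the one-parameter family rather than inside a fixed smooth $4$-manifold, and to invoke the flipping/divisorial dichotomy for extremal neighborhoods of type mk1A. Concretely: blow up $V_{-4}$ so as to obtain the chain $[n+2,2,\dotsc,2]$ with a $(-1)$-curve $C$ attached to the first $(-2)$-curve; contract the chain to the Wahl singularity $Q'$ of type $\frac{1}{n^2}(1,n-1)$ and take the $\mathbb{Q}$-Gorenstein smoothing $\mathcal{W}\to\Delta$, so that $W_t\supset B_{n,1}$ for $t\neq 0$; contracting $C$ as well yields a singularity $Q$ of type $\frac{1}{4}(1,1)$ because $[n+2,1,2,\dotsc,2]=[4]$, and $(C\subset\mathcal{W})\to(Q\in\mathcal{Z})$ is an mk1A extremal neighborhood. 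The cited numerical fact (Urz\'ua~\cite[Example~2.12]{Urzua-2013}, HTU~\cite{Hacking-Tevelev-Urzua-2013}) is that this neighborhood is \emph{flipping} when $n$ is odd and \emph{divisorial} when $n$ is even: this is where parity enters, and it is invisible to any curve-pairing bookkeeping. For odd $n$ the flip changes only the central fiber, replacing it by the extremal $P$-resolution of $Q$, which is the minimal resolution $V_{-4}$ with no singularities; since all fibers of the flipped family are then smooth, the family is smoothly trivial and $B_{n,1}\subset W_t\cong V_{-4}$. For even $n$ the divisorial contraction restricts to a blow-down between general fibers, so $W_t\cong Z_t\sharp\overline{\mathbb{CP}}^2$ where $Z_t$ is the Milnor fiber of the $\mathbb{Q}$-Gorenstein smoothing of $\frac{1}{4}(1,1)$, namely $B_{2,1}$, giving $B_{n,1}\hookrightarrow B_{2,1}\sharp\overline{\mathbb{CP}}^2$. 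Your sketch contains neither the family-theoretic framework nor the dichotomy, so the gap is fatal as it stands. (A smaller inaccuracy: your $n-3$ disjoint exceptional curves are not ``precisely the curves contracted when passing from the chain \eqref{equation:linear-chain-p^2/(pq-1)} to the $\delta$-half''; the flips of Corollary~\ref{corollary:flip->delta-half} contract $C$, the tail of the $B$-chain, and the $A$-curves in several rounds, which is a different configuration.)
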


\begin{proof}
As in the proof of Theorem~\ref{theorem:Bpq} above, we may assume that the $(-4)$-curve is a complex rational curve in a complex surface $V_{-4}$. For any $n \ge 3$, by blowing up appropriately $V_{-4}$, we get a regular neighborhood $U$ of the following linear chain of $\mathbb{CP}^1$'s:
\begin{equation*}
\begin{tikzpicture}
\node[bullet] (10) at (1,0) [label=above:{$-n-2$}] {};
\node[bullet] (20) at (2,0) [label=above:{$-2$}] {};

\node[empty] (250) at (2.5,0) [] {};
\node[empty] (30) at (3,0) [] {};

\node[bullet] (350) at (3.5,0) [label=above:{$-2$}] {};

\node[bullet] (2-1) at (2,-1) [label=left:{$-1$},label=right:{$C$}] {};

\draw [-] (10)--(20);
\draw [-] (20)--(250);
\draw [dotted] (20)--(350);
\draw [-] (30)--(350);

\draw [-] (20)--(2-1);

\draw [thick, decoration={brace,mirror,raise=0.5em}, decorate] (20) -- (350) node [pos=0.5,anchor=north,yshift=-0.75em] {$n-2$};

\end{tikzpicture}
\end{equation*}
Now we contract the linear chain corresponding to $n^2/(n-1)$ in $U$ so that we get a singular surface $W$ with a cyclic quotient singularity $Q' \in C \subset W$ of type $\frac{1}{n^2}(1,n-1)$. Let $(C \subset W) \to (Q \in Z)$ be the contraction of $C \subset W$. Then $Q$ is a cyclic quotient singularity of type $\frac{1}{4}(1,1)$ because $[n+2,1,2,\dotsc,2]=[4]$. Let $\mathcal{W} \to \Delta$ be a deformation of $W$ induced by the $\mathbb{Q}$-Gorenstein smoothing of $Q'$ and let $\mathcal{Z} \to \Delta$ be the
blown-down deformation of $\mathcal{W} \to \Delta$. Notice that a general fiber $W_t$ ($t \neq 0$) contains a rational homology ball $B_{n,1}$.

Assume that $n \ge 3$ is an odd integer. According to Example~2.12 in Urz\'ua~\cite{Urzua-2013}, $(C \subset \mathcal{W}) \to (Q \in \mathcal{Z})$ is a flipping extremal neighborhood of type mk1A. So if we apply a flip to the curve $C \subset W$, then, by Equation~\eqref{equation:flip-in-the-middle}, the resulting deformation $\mathcal{W}^+ \to \Delta$ has a central fiber $W_0^+$ which is isomorphic to $V_{-4}$ because its extremal $P$-resolution is just the minimal resolution of $Q$ without any singularities. On the other hand, since the general fiber $W^+_t=W_t$ ($t \neq 0$) contains a rational homology ball $B_{n,1}$ and $W^+_0$ is diffeomorphic to $W^+_t$, there is an embedded rational homology ball $B_{n,1}$ in $V_{-4}$.

Let $n \ge 3$ be an even integer. Then $(C \subset \mathcal{W}) \to (Q \in \mathcal{Z})$ is a divisorial extremal neighborhood of type mk1A; cf.~HTU~\cite{Hacking-Tevelev-Urzua-2013}, Urz\'ua~\cite{Urzua-2013}. That is, the map $W_t \to Z_t$ ($t \neq 0$) is a blow-down. On the other hand, $Q$ is a cyclic quotient surface singularity of type $\frac{1}{4}(1,1)$ and $\mathcal{Z} \to \Delta$ is the smoothing of $Q$. So the general fiber $Z_t$ is a rational homology ball $B_{2,1}$, while $W_t$ contains a rational homology ball $B_{n,1}$. Therefore there is an embedding $B_{n,1} \hookrightarrow B_{2,1} \sharp \overline{\mathbb{CP}}^2$.
\end{proof}

\section{A rational blow-up surgery}
\label{section:rational-blowup}

Since a rational blow-down surgery was very successful for constructing many interesting examples of $4$-manifolds, it would be also an intriguing problem to consider the converse surgery, that is, replacing a rational homology ball $B_{p,q}$ by the regular neighborhood $C_{p,q}$, which is called a \emph{rational blow-up surgery}.

Let $X$ be a regular neighborhood of the $\delta$-half linear corresponding to $p^2/(pq-1)$ and let $B_{p,q}$ be a rational homology ball embedded in $X$ which is constructed in the proof of Theorem~\ref{theorem:Bpq} above. Let $Z=(X-B_{p,q}) \cup_{L(p^2,pq-1)} C_{p,q}$ be a smooth $4$-manifold obtained by rationally blowing up of $X$, that is, by replacing the rational homology ball $B_{p,q}$ with a configuration $C_{p,q}$. Suppose that $\widetilde{X}$ is a regular neighborhood of the linear chain in \eqref{equation:linear-chain-p^2/(pq-1)} which is obtained by blowing up appropriately $X$ as in Corollary~\ref{corollary:delta-half->chain} above. Then we have

\begin{proposition}
$Z$ is diffeomorphic to $\widetilde{X}$.
\end{proposition}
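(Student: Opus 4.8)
The plan is to exhibit $\widetilde{X}$ and $Z$ as two outcomes of one and the same local surgery performed on the germ $(Q' \in W)$ coming from the proof of Theorem~\ref{theorem:Bpq}, one via the minimal resolution of the Wahl singularity and the other via its $\mathbb{Q}$-Gorenstein smoothing. First I would recall that setup: blowing up $X$ as in Corollary~\ref{corollary:delta-half->chain} produces $U = \widetilde{X}$, a regular neighborhood of the chain \eqref{equation:linear-chain-p^2/(pq-1)}, together with the contraction $U \to W$ of the class-$W$ subchain to a Wahl singularity $Q' \in W$ of type $\frac{1}{p^2}(1,pq-1)$ and a $\mathbb{Q}$-Gorenstein smoothing $\mathcal{W} \to \Delta$ whose general fiber $W_t$ contains the embedded $B_{p,q}$. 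The flip deformation $\mathcal{Y} \to \Delta$ of Corollary~\ref{corollary:flip->delta-half} has central fiber $Y_0 = X$ and, since flips alter only the central fiber, general fiber $Y_t \cong W_t$; composing with the local-triviality diffeomorphism $Y_t \cong Y_0$ gives a diffeomorphism $W_t \cong X$ which by construction carries $B_{p,q} \subset W_t$ onto the embedded ball $B_{p,q} \subset X$.

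The key observation I would then make is that both $U$ and $W_t$ are obtained from $W$ by a purely local modification of a closed neighborhood $\nu$ of $Q'$ that leaves the complement $N := W \setminus \mathrm{int}\,\nu$ untouched. Topologically $\nu$ is the cone on the link $L(p^2,pq-1) \subset \partial N$; the minimal resolution replaces it by the plumbing $C_{p,q}$ of the class-$W$ chain, while the smoothing replaces it by the Milnor fiber $B_{p,q}$, both glued to $N$ along $L(p^2,pq-1)$. Hence
\begin{equation*}
\widetilde{X} = U \;\cong\; N \cup_{L(p^2,pq-1)} C_{p,q},
\qquad
W_t \;\cong\; N \cup_{L(p^2,pq-1)} B_{p,q},
\end{equation*}
so that $N \cong W_t \setminus \mathrm{int}\, B_{p,q}$ and therefore $\widetilde{X} \cong (W_t \setminus \mathrm{int}\, B_{p,q}) \cup_{L(p^2,pq-1)} C_{p,q}$.

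Next I would transport this identification along the diffeomorphism $W_t \cong X$ of the first paragraph. Since it carries $B_{p,q} \subset W_t$ onto the constructed ball $B_{p,q} \subset X$, it restricts to a diffeomorphism $W_t \setminus \mathrm{int}\, B_{p,q} \cong X \setminus \mathrm{int}\, B_{p,q}$ that matches the two boundary copies of $L(p^2,pq-1)$, and gluing $C_{p,q}$ back along this boundary yields
\begin{equation*}
\widetilde{X} \;\cong\; (X \setminus \mathrm{int}\, B_{p,q}) \cup_{L(p^2,pq-1)} C_{p,q} \;=\; Z,
\end{equation*}
which is the assertion.

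The hard part will be the bookkeeping of the boundary identifications, namely checking that the gluing $\partial C_{p,q} \to L(p^2,pq-1)$ realizing $\widetilde{X}$ as $N \cup C_{p,q}$ agrees, under $W_t \cong X$, with the gluing defining the rational blow-up $Z$. Both identifications are induced from the link structure of the single germ $(Q' \in W)$: the resolution and the smoothing attach $C_{p,q}$ and $B_{p,q}$ to $N$ along the \emph{same} copy of $L(p^2,pq-1)$, and the rational blow-up is by definition the operation that removes $B_{p,q}$ and inserts $C_{p,q}$ along this boundary. I would verify that this common identification is compatible with orientations and with the self-diffeomorphisms of $L(p^2,pq-1)$ implicit in the definitions of $B_{p,q}$ and $C_{p,q}$, after which the displayed diffeomorphisms combine to give the desired one.
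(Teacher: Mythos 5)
Your proposal is correct and follows essentially the same route as the paper: contract the class-$W$ chain in $\widetilde{X}$ to the Wahl singularity, recognize the general fiber of its $\mathbb{Q}$-Gorenstein smoothing as the rational blow-down $(\widetilde{X}-C_{p,q})\cup_{L(p^2,pq-1)}B_{p,q}$, use the flip deformation of Corollary~\ref{corollary:flip->delta-half} to get a diffeomorphism $X \cong W_t$ carrying the embedded ball to the embedded ball, and conclude that rationally blowing up $X$ reverses the blow-down and returns $\widetilde{X}$. Your version merely spells out the cut-and-paste decomposition $N\cup_{L(p^2,pq-1)}C_{p,q}$ versus $N\cup_{L(p^2,pq-1)}B_{p,q}$ and the boundary-gluing bookkeeping that the paper compresses into its appeal to Remark~\ref{remark:Q-bldn=smoothing}.
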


\begin{proof}
Let $\widehat{X}$ be a singular complex surface obtained by contracting the linear chain \eqref{equation:Cpq} in $\widetilde{X}$ to the singular point $x \in \widehat{X}$. Let $\pi \colon \widehat{\smash[b]{\mathcal{X}}} \to \Delta$ be the smoothing of $\widehat{X}$ induced by a local smoothing of $x \in \widehat{X}$. Then a general fiber $\widehat{X}_t = \pi^{-1}(t)$ ($t \neq 0$) contains a rational homology ball $B_{p,q}$ and the procedure from $\widetilde{X}$ to $\widehat{X}_t$ is the rational blow-down surgery as we saw in Remark~\ref{remark:Q-bldn=smoothing} above. That is, $\widehat{X}_t$ is obtained from $\widetilde{X}$ by rationally blowing down $C_{p,q}$.

On the other hand, according to the proof of Theorem~\ref{theorem:Bpq} above, if we apply flips appropriately to $\pi \colon \widehat{\smash[b]{\mathcal{X}}} \to \Delta$, we obtain a new deformation $\pi' \colon \widehat{\smash[b]{\mathcal{X}}}' \to \Delta$ such that its central fiber $\pi'^{-1}(0)$ is changed to $X$ but its general fiber is still $\widehat{X}_t$; so $X$ is diffeomorphic to $\widehat{X}_t$. But, if we rationally blow up $X$, or equivalently, if we rationally blow up $\widehat{X}_t$ , then we get again $\widetilde{X}$ as a resulting smooth $4$-manifold, which can be obtained from $X$ by appropriate blowing-ups. Therefore the assertion follows.
\end{proof}


\end{document}